\documentclass[11pt,letterpaper]{article}

\usepackage[margin=0.9in]{geometry}
\usepackage{amsmath,amssymb,amsthm}
\usepackage{graphicx}
\usepackage[authoryear,longnamesfirst]{natbib}
\setcitestyle{authoryear,round,semicolon,aysep={,},yysep={,}}
\usepackage[colorlinks=true,linkcolor=blue,citecolor=blue,urlcolor=blue,
  pdftitle={Consistency of penalized maximum likelihood estimation for multidimensional change-in-velocity detection},
  pdfauthor={Linh Do, Dat Do, Scott A. McKinley}]{hyperref}

\newcommand{\Rbb}{\mathbb{R}}
\newcommand{\Pbb}{\mathbb{P}}
\newcommand{\tbf}{\boldsymbol{\tau}}
\newcommand{\Vbf}{\mathbf{V}}
\newcommand{\av}{\underline{a}}
\newcommand{\norm}[1]{\left\|#1\right\|}
\DeclareMathOperator*{\argmax}{arg\,max}

\newtheorem{theorem}{Theorem}
\newtheorem{lemma}[theorem]{Lemma}

\theoremstyle{definition}
\newtheorem{definition}{Definition}
\newtheorem{remark}{Remark}
\theoremstyle{plain}
\newtheorem*{restatedlemma}{Lemma}
\newtheorem*{restatedtheorem}{Theorem}
\theoremstyle{definition}

\renewenvironment{proof}{\noindent\textit{Proof.}}{\hfill$\square$\medskip}

\title{Consistency of penalized maximum likelihood estimation
  for multidimensional change-in-velocity detection\thanks{This research was supported by the NSF-Simons Southeast Center for
Mathematics and Biology (SCMB) through grants NSF DMS-1764406 and Simons
Foundation/SFARI 594594.}}

\author{%
  Linh Do$^{1,*}$ \qquad Dat Do$^{2}$ \qquad Scott A.\ McKinley$^{3}$\\[1.5ex]
  {\small $^{1}$Department of Biostatistics and Bioinformatics,
    Duke University School of Medicine, Durham, NC, USA}\\
  {\small $^{2}$Department of Statistics,
    University of Chicago, Chicago, IL, USA}\\
  {\small $^{3}$Department of Mathematics,
    Tulane University, New Orleans, LA, USA}\\[1ex]
  {\small $^{*}$Corresponding author: thuylinh.do@duke.edu}\\
  {\small dodat@uchicago.edu \quad scott.mckinley@tulane.edu}
}
\date{}

\begin{document}
\maketitle

\begin{abstract}
We establish statistical consistency for the penalized maximum-likelihood
estimator underlying CPLASS, a method for detecting changes in velocity in
\(d\)-dimensional time-series data. The signal is modeled as a continuous
piecewise-linear trajectory observed with independent Gaussian noise. Unlike
classical change-in-mean models, continuity across adjacent segments couples
their parameters and prevents direct application of standard segmentation
arguments. Under compact parameter spaces, minimum segment-length and
velocity-jump conditions, and a strengthened Schwarz information criterion
penalty \(\rho_k(\log n)^\gamma\) with \(\gamma>1\), we prove joint consistency
of the estimated number of segments and all changepoint locations in the
high-frequency regime \(n\to\infty\). The maximum changepoint-location error is
\(O_{\mathbb P}\{(\log n/n)^{1/2}\}\). The proof first uses empirical-process
theory to establish convergence of the fitted signal, variance, and likelihood
for each fixed model size. It then combines overfitting and underfitting
arguments with a two-stage geometric localization analysis, yielding an initial
\((\log n/n)^{1/3}\) rate that is sharpened by exploiting the local two-segment
continuous piecewise-linear structure.
\end{abstract}

\noindent\textbf{Keywords:} change-in-slope; changepoint detection; consistency;
empirical process theory; penalized MLE; piecewise-linear

\medskip
\section{Introduction}

Multiple changepoint detection for piecewise-constant signals is classical,
with well-developed dynamic programming, pruning, and greedy segmentation
methods
\citep{binseg,Optimal_Partition,PELT,Piotr2014WBS}, together with a mature
consistency theory
\citep{YAO1988181,Yao1989}. Change-in-velocity, or equivalently
change-in-slope in 1D case, detection appears similar because the observation interval is
again partitioned into segments and a simple model is fitted within each
segment. The continuity constraint linking adjacent line segments, however,
changes both the computational and statistical structure of the problem.

For a change-in-mean model, the residual cost of a segment depends only on
the observations within that segment and its boundary changepoints. This
separability produces an additive objective and supports standard
dynamic-programming recursions and one-change-at-a-time procedures. In a
continuous piecewise-linear model, the fitted value at the end of one segment
becomes the starting value of the next. Moving a changepoint can therefore
alter the slopes and intercepts of several neighboring segments. As a
consequence, the best one-changepoint approximation to a signal with multiple
true slope changes need not place its changepoint near any individual true
change
\citep{cpop,BaranowskiFryzlewicz2019,Do2025-vv}. Standard binary
segmentation and unmodified segment-additive dynamic programming therefore do
not transfer directly. CPOP \citep{cpop}, for example, restores a
dynamic-programming recursion by augmenting the state with the fitted endpoint
value inherited from the preceding segment.

Several specialized approaches have been developed for one-dimensional
change-in-slope problems. Trend filtering \citep{trendfilter} replaces the
$\ell_0$ penalty on slope changes with a total-variation penalty on discrete
derivatives. Narrowest-Over-Threshold
\citep{BaranowskiFryzlewicz2019} searches over many local intervals and uses
strengthened logarithmic control to localize slope changes. CPOP
\citep{cpop} retains an $\ell_0$-penalized least-squares formulation but
modifies dynamic programming to accommodate continuity. Narrowest
Significance Pursuit \citep{Fryzlewicz2023} instead returns short intervals
that contain changepoints with prescribed global significance. The available
theory for these procedures is primarily developed for scalar-valued response
sequences. Although some frameworks admit possible multivariate extensions,
the corresponding consistency arguments for vector-valued continuous
piecewise-linear signals have not been developed in detail.

The present work studies a $d$-dimensional continuous piecewise-linear signal
with a common set of changepoints across coordinates. Our estimator is a
global penalized maximum-likelihood estimator using a strengthened Schwarz
information criterion penalty
\(
  \rho_k(\log n)^\gamma,
  \gamma>1.
\)
Strengthened logarithmic penalties have previously been used to control the
stochastic complexity generated by searches over growing collections of
candidate changepoints or intervals
\citep{fryzlewicz2014segmentation,Piotr2014WBS,
BaranowskiFryzlewicz2019}. Here, the same device separates the likelihood gain
available to overfitted continuous piecewise-linear models from the additional
penalty incurred by introducing spurious segments.

The technical foundation of the proof is the empirical-process theory for
maximum-likelihood and other $M$-estimators developed by
\citet{Vandegeer}. A Hellinger bracketing-entropy bound for the
$d$-dimensional continuous piecewise-linear model yields high-probability
rates for the fitted signal, variance, and likelihood for each fixed number
of segments. Model-selection consistency is then obtained by treating
overfitting and underfitting separately. Overfitting is excluded because the
additional sSIC penalty dominates the stochastic likelihood improvement,
whereas underfitting is excluded because merging across a true velocity
change creates a non-vanishing approximation error under minimum spacing and
minimum jump-size conditions.

The localization argument is related to segmented-regression consistency
results such as \citet{Liu1997-go}, but is adapted here to the
multidimensional, continuity-constrained, high-frequency setting. It proceeds
in two stages. A best linear approximation on a shrinking neighborhood of a
true changepoint first yields the preliminary rate
$(\log n/n)^{1/3}$. Enlarging the neighborhood to include an adjacent segment
and analyzing the best continuous two-segment approximation sharpens this to
$(\log n/n)^{1/2}$.

This paper makes rigorous the consistency claim associated with the global
estimator underlying CPLASS \citep{Do2025-vv}. We prove joint consistency
of the estimated number of segments and all changepoint locations for
$d$-dimensional continuous piecewise-linear trajectories observed in the
high-frequency regime. The result extends related scalar change-in-slope and
segmented-regression arguments to vector-valued velocities under a shared
continuity constraint.

\section{Model and estimator}\label{sec:model}

We observe $(Y_i)_{i=1}^n\subset\Rbb^d$ at the fixed design points
\(
  t_i=\frac{i}{n}, i=1,\ldots,n,
\)
according to
\begin{equation}\label{eq:true-model}
  Y_i \overset{\mathrm{ind.}}{\sim}
  \mathcal{N}_d\!\left(f^0(t_i),\,\sigma_0^2 I_d\right),
   i=1,\ldots,n.
\end{equation}
Thus, the asymptotic regime considered throughout is the high-frequency
limit $n\to\infty$ on the fixed time interval $[0,1]$. Throughout, the dimension $d$ and the true number of segments $k_0$ are
fixed and do not depend on $n$.

For an integer $k\ge 1$, let $\mathcal{F}_k$ denote the class of
$d$-dimensional continuous piecewise-linear functions that admit a
representation with $k$ linear segments and changepoints on the observation
grid
\(
  \mathcal{T}_n
  :=
  \left\{\frac{1}{n},\ldots,\frac{n-1}{n}\right\}.
\)
For notational simplicity, we suppress the dependence of $\mathcal{F}_k$
on $n$ induced by the observation grid.

For simplicity, we assume throughout that the true changepoints are aligned
with the observation grid, so that
$\tau_j^0\in\mathcal{T}_n, j=1,\ldots,k_0-1$,
and hence $f^0\in\mathcal F_{k_0}$. More precisely, the asymptotic result is
understood along a sequence of observation grids containing the true
changepoints. This assumption avoids an inessential discretization issue.
For an off-grid true changepoint, one may instead use its nearest point in
$\mathcal T_n$; under the bounded-velocity assumption, the resulting signal
approximation has empirical squared error of order $O(n^{-2})$, which is
negligible relative to the estimation rate established below.

Such a representation is parametrized by the ordered changepoint vector
\(
  \tbf=(\tau_1,\ldots,\tau_{k-1}),
  0=:\tau_0<\tau_1<\cdots<\tau_{k-1}<\tau_k:=1,
\)
with $\tau_j\in\mathcal{T}_n$, velocity vectors
\(
  \Vbf=(V_1,\ldots,V_k)\in\mathcal{V}^k,
  \mathcal{V}\subset\Rbb^d,
\)
and an initial position
\(
  \av\in\mathcal{A}\subset\Rbb^d.
\)
The sets $\mathcal{V}$ and $\mathcal{A}$ are compact, and the noise
variance belongs to
\(
  \Omega=
  [\underline{\sigma}^2,\overline{\sigma}^2]
  \subset(0,\infty).
\)

Writing $(x)_+=x\mathbf{1}\{x>0\}$, the signal associated with
$(\tbf,\Vbf,\av)$ is
\begin{equation}\label{eq:signal}
  f_{\tbf,\Vbf,\av}(t)
  =
  \av+V_1t+
  \sum_{j=1}^{k-1}
  (V_{j+1}-V_j)(t-\tau_j)_+,
   t\in[0,1].
\end{equation}
Equivalently, for $t\in[\tau_{i-1},\tau_i]$,
$i=1,\ldots,k$,
\begin{equation}\label{eq:signal-piecewise}
  f_{\tbf,\Vbf,\av}(t)
  =
  \av+V_i t
  -
  \sum_{j=1}^{i-1}
  (V_{j+1}-V_j)\tau_j,
\end{equation}
with the convention that an empty sum equals zero. Both representations
ensure that adjacent linear pieces meet continuously at every changepoint.

The candidate classes are defined by representability rather than
minimality: adjacent velocities in a representation belonging to
$\mathcal{F}_k$ are permitted to coincide. Consequently, the model classes
are nested in $k$. In contrast, the true signal
$f^0=f_{\tbf^0,\Vbf^0,\av^0}\in\mathcal F_{k_0}$ is assumed to have a
minimal representation with $k_0$ segments, so that
\(
  V_{j+1}^0\neq V_j^0, j=1,\ldots,k_0-1.
\)

Given a fixed upper bound $\overline{k}\ge k_0$ on the number of segments,
with $\overline{k}$ independent of $n$, we study the global penalized
maximum-likelihood estimator

\begin{equation}\label{eq:pen-MLE}
  (\widehat{f}_n,\widehat{\sigma}_n^2,\widehat{k}_n)
  =
  \argmax_{\substack{1\le k\le\overline{k}\\
                     f\in\mathcal{F}_k,\;\sigma^2\in\Omega}}
  \left\{
  2\sum_{i=1}^n
  \log\mathcal{N}_d\!\left(
    Y_i\mid f(t_i),\sigma^2 I_d
  \right)
  -
  \rho_k(\log n)^\gamma
  \right\},
   \gamma>1,
\end{equation}
where $\rho_k:=d(k+1)+k.$

Here, $\rho_k$ counts the $kd$ velocity coordinates, the $d$
coordinates of the initial position, the $k-1$ changepoint locations,
and the variance parameter:
\(
  kd+d+(k-1)+1=d(k+1)+k.
\)

Equivalently, for each fixed $k\in[\overline{k}]$, define the
unpenalized maximum-likelihood estimator
\begin{equation}\label{eq:MLE-k}
  (\widehat{f}_n^{(k)},\widehat{\sigma}_{n,k}^2)
  =
  \argmax_{f\in\mathcal{F}_k,\;\sigma^2\in\Omega}
  \sum_{i=1}^n
  \log\mathcal{N}_d\!\left(
    Y_i\mid f(t_i),\sigma^2 I_d
  \right).
\end{equation}
The estimated number of segments is then selected according to
\begin{equation}\label{eq:khat}
  \widehat{k}_n
  =
  \argmax_{1\le k\le\overline{k}}
  \left\{
  2\sum_{i=1}^n
  \log\mathcal{N}_d\!\left(
    Y_i\mid \widehat{f}_n^{(k)}(t_i),
    \widehat{\sigma}_{n,k}^2 I_d
  \right)
  -
  \rho_k(\log n)^\gamma
  \right\}.
\end{equation}

The compact variance space
\(
  \Omega=
  [\underline{\sigma}^2,\overline{\sigma}^2]
  \subset(0,\infty)
\)
is used as a regularity condition in the consistency argument, and we
assume that the true variance satisfies
\(
  \sigma_0^2
  \in
  (\underline{\sigma}^2,\overline{\sigma}^2).
\)

\section{Main results}\label{sec:results}

For $f\in\mathcal{F}_k$ and $\sigma^2\in\Omega$, define the normalized
log-likelihood
\begin{equation}\label{eq:normalized-loglik}
  \overline{\mathcal{L}}_n(f,\sigma^2)
  :=
  \frac{1}{n}
  \sum_{i=1}^n
  \log\mathcal{N}_d\!\left(
    Y_i\mid f(t_i),\sigma^2 I_d
  \right),
\end{equation}
and, for functions $f,g:[0,1]\to\Rbb^d$, define the empirical squared
norm
\begin{equation}\label{eq:empirical-norm}
  \|f-g\|_n^2
  :=
  \frac{1}{n}
  \sum_{i=1}^n
  \left\|f(t_i)-g(t_i)\right\|_2^2.
\end{equation}

For $f,g:[0,1]\to\Rbb^d$ and
$\sigma^2,\widetilde{\sigma}^2\in\Omega$, write
\(
  p_{f,\sigma^2,i}
  :=
  \mathcal N_d\!\left(
    f(t_i),\sigma^2I_d
  \right),
  p_{g,\widetilde{\sigma}^2,i}
  :=
  \mathcal N_d\!\left(
    g(t_i),\widetilde{\sigma}^2I_d
  \right),
\)
and define the average squared Hellinger distance
\begin{equation}\label{eq:average-hellinger}
  h_n^2\!\left(
    p_{f,\sigma^2},
    p_{g,\widetilde{\sigma}^2}
  \right)
  :=
  \frac1n
  \sum_{i=1}^n
  h^2\!\left(
    p_{f,\sigma^2,i},
    p_{g,\widetilde{\sigma}^2,i}
  \right).
\end{equation}

Recall that
\(
  \rho_k=d(k+1)+k.
\)
Because the candidate classes are defined by representability,
\(
  \mathcal{F}_{k_0}\subseteq\mathcal{F}_k,
   k\ge k_0.
\)

\begin{lemma}[Gaussian Hellinger equivalence]
\label{lem:gaussian-hellinger}
Let
\(
  p_{\mu,\sigma^2}
  =
  \mathcal N_d(\mu,\sigma^2I_d), 
  p_{\widetilde\mu,\widetilde\sigma^2}
  =
  \mathcal N_d(\widetilde\mu,\widetilde\sigma^2I_d).
\)
Suppose that $\mu$ and $\widetilde\mu$ range over a fixed compact subset
of $\Rbb^d$ and that
$\sigma^2,\widetilde\sigma^2\in\Omega$. Then
\begin{equation}\label{eq:gaussian-hellinger-equivalence}
  h^2\!\left(
    p_{\mu,\sigma^2},
    p_{\widetilde\mu,\widetilde\sigma^2}
  \right)
  \asymp
  \|\mu-\widetilde\mu\|_2^2
  +
  \left|\sigma^2-\widetilde\sigma^2\right|^2,
\end{equation}
where the equivalence constants depend only on the compact parameter
spaces.

Consequently, for $f,g\in\mathcal F_k$ and
$\sigma^2,\widetilde\sigma^2\in\Omega$,
\begin{equation}\label{eq:average-hellinger-equivalence}
  h_n^2\!\left(
    p_{f,\sigma^2},
    p_{g,\widetilde\sigma^2}
  \right)
  \asymp
  \|f-g\|_n^2
  +
  \left|\sigma^2-\widetilde\sigma^2\right|^2.
\end{equation}
\end{lemma}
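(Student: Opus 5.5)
We will use the closed form of the Hellinger affinity between isotropic Gaussians. With the convention $h^2(p,q)=1-\int\sqrt{pq}$ (any other normalization changes only constants), a direct completion-of-squares computation gives
\[
  1-h^2\!\left(p_{\mu,\sigma^2},p_{\widetilde\mu,\widetilde\sigma^2}\right)
  =
  \left(\frac{2\sigma\widetilde\sigma}{\sigma^2+\widetilde\sigma^2}\right)^{d/2}
  \exp\!\left(-\frac{\|\mu-\widetilde\mu\|_2^2}{4(\sigma^2+\widetilde\sigma^2)}\right)
  =: A(\sigma,\widetilde\sigma)\,e^{-B},
\]
so $h^2=1-Ae^{-B}$ with $A\in(0,1]$ and $B\ge0$. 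The plan is to show separately that $1-A\asymp|\sigma^2-\widetilde\sigma^2|^2$ and $B\asymp\|\mu-\widetilde\mu\|_2^2$, uniformly over the compact sets. We then combine these via $1-Ae^{-B}\asymp (1-A)+(1-e^{-B})$.

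For the variance factor we write $A=\bigl(1-\frac{(\sigma-\widetilde\sigma)^2}{\sigma^2+\widetilde\sigma^2}\bigr)^{d/2}$. The quantity $u:=\frac{(\sigma-\widetilde\sigma)^2}{\sigma^2+\widetilde\sigma^2}$ lies in a compact subinterval $[0,u^\ast]$ of $[0,1)$ because $\Omega$ is bounded away from $0$ and $\infty$. On that interval $1-(1-u)^{d/2}\asymp u$ with constants depending on $d,u^\ast$, since the function is smooth, vanishes at $0$ and has positive derivative there. Moreover $u\asymp(\sigma-\widetilde\sigma)^2$ because the denominator lies in $[2\underline\sigma^2,2\overline\sigma^2]$. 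Finally $|\sigma^2-\widetilde\sigma^2|=|\sigma-\widetilde\sigma|(\sigma+\widetilde\sigma)$ with $\sigma+\widetilde\sigma\in[2\underline\sigma,2\overline\sigma]$, so $1-A\asymp|\sigma^2-\widetilde\sigma^2|^2$. For the mean factor, $B\asymp\|\mu-\widetilde\mu\|_2^2$ is immediate. Because $\mu,\widetilde\mu$ lie in a compact set, $B\le B^\ast<\infty$, and on $[0,B^\ast]$ we have $1-e^{-B}\asymp B$, with lower constant $(1-e^{-B^\ast})/B^\ast$ by concavity.

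The combining step is the only point needing care, though it is elementary. Set $a:=1-A\in[0,1-A_\ast]$ and $b:=1-e^{-B}\in[0,1-e^{-B^\ast}]$. Then $1-Ae^{-B}=1-(1-a)(1-b)=a+b-ab$. The upper bound $a+b-ab\le a+b$ is trivial. For the lower bound, $ab\le (1-e^{-B^\ast})\,a$, so $a+b-ab\ge e^{-B^\ast}a+b\ge e^{-B^\ast}(a+b)$. This yields the first display, with constants depending only on $d$, $\Omega$ and the compact set containing the means.

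For the averaged statement we apply the pointwise equivalence with $\mu=f(t_i)$ and $\widetilde\mu=g(t_i)$. We must check that these lie in a fixed compact set. Every $f\in\mathcal F_k$ satisfies $\|f(t)\|_2\le \sup_{\mathcal A}\|\av\|+\sup_{\mathcal V}\|V\|$, since by \eqref{eq:signal-piecewise} $f$ is continuous, starts in $\mathcal A$, and has velocities in $\mathcal V$ on an interval of length one. So all values lie in a ball whose radius is independent of $n$, $k$ and $\tbf$. The constants are then uniform in $i$. Averaging over $i=1,\dots,n$ gives $h_n^2\asymp \frac1n\sum_i\|f(t_i)-g(t_i)\|_2^2+|\sigma^2-\widetilde\sigma^2|^2=\|f-g\|_n^2+|\sigma^2-\widetilde\sigma^2|^2$, because the variance term does not depend on $i$.
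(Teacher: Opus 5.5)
Your proposal is correct and follows essentially the same route as the paper: the closed-form Gaussian Hellinger affinity $h^2=1-Ae^{-B}$, separate linearization of $1-A$ and $1-e^{-B}$ using compactness of $\Omega$ and of the mean range, then recombination. Your bound $a+b-ab\ge e^{-B^\ast}(a+b)$ plays the same role as the paper's decomposition $(1-A)+A(1-e^{-B})$ with $A$ bounded below. Your explicit check that every $f\in\mathcal F_k$ takes values in a ball of radius $\sup_{\mathcal A}\|\av\|+\sup_{\mathcal V}\|V\|$, independent of $k$ and $n$, together with the term-by-term averaging, fills in the step the paper only asserts for the averaged equivalence.
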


The compactness of the relevant mean space follows from the compactness of
$\mathcal A$ and $\mathcal V$ together with $t\in[0,1]$ and the fixed upper
bound on the number of segments.

\begin{lemma}[Fixed-model convergence rates]\label{lem:rates}
Suppose that the data are generated according to
\eqref{eq:true-model}, with
\(
  f^0\in\mathcal{F}_{k_0},\sigma_0^2\in
(\underline{\sigma}^2,\overline{\sigma}^2).
\)
Then, for each fixed $k\ge k_0$, there exist constants
$C,c_1,c_2>0$, not depending on $n$, such that, with probability at
least $1-c_1n^{-c_2}$,
\begin{align}
  \left\|\widehat{f}_n^{(k)}-f^0\right\|_n^2
  &\le
  C\frac{\rho_k\log n}{n},
  \label{eq:fn-rate}\\
  \left|\widehat{\sigma}_{n,k}^2-\sigma_0^2\right|
  &\le
  C\left(
    \frac{\rho_k\log n}{n}
  \right)^{1/2},
  \label{eq:sigma-rate}\\
  0
  \le
  \overline{\mathcal{L}}_n\!\left(
    \widehat{f}_n^{(k)},
    \widehat{\sigma}_{n,k}^2
  \right)
  -
  \overline{\mathcal{L}}_n(f^0,\sigma_0^2)
  &\le
  C\frac{\rho_k\log n}{n}.
  \label{eq:llh-rate}
\end{align}
The constants may depend on
$d$, $k$, $\mathcal{V}$, $\mathcal{A}$, and $\Omega$, but not on $n$.
\end{lemma}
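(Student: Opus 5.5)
We would prove Lemma~\ref{lem:rates} by the standard van de Geer route for maximum-likelihood estimators over a non-i.i.d.\ fixed design. We apply the rate theorem for the average Hellinger distance $h_n$, then transfer the result to the signal and variance via Lemma~\ref{lem:gaussian-hellinger}. Fix $k\ge k_0$. Because $\mathcal F_{k_0}\subseteq\mathcal F_k$, the truth $(f^0,\sigma_0^2)$ lies in the candidate model $\mathcal P_k:=\{p_{f,\sigma^2}: f\in\mathcal F_k,\ \sigma^2\in\Omega\}$. The basic inequality $\overline{\mathcal L}_n(\widehat f_n^{(k)},\widehat\sigma_{n,k}^2)\ge\overline{\mathcal L}_n(f^0,\sigma_0^2)$ therefore holds. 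This already gives the left inequality in \eqref{eq:llh-rate}, and it is the starting point for the Hellinger rate.

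\textbf{Step 1 (entropy).} We bound the bracketing entropy of $\mathcal P_k$ in $h_n$. We fix the $\binom{n-1}{k-1}\le n^{k-1}$ changepoint configurations $\tbf\in\mathcal T_n^{k-1}$. For each one, the map $(\Vbf,\av,\sigma^2)\mapsto p_{f_{\tbf,\Vbf,\av},\sigma^2}$ is Lipschitz from a compact subset of $\Rbb^{kd+d+1}$. The Lipschitz bound holds uniformly in $t\in[0,1]$ by \eqref{eq:signal} and compactness, and also holds for the envelope, using the Gaussian tails controlled by $\Omega$. A Euclidean grid then gives brackets, and we obtain
\[
H_B(\delta,\mathcal P_k,h_n)\;\lesssim\;(k-1)\log n+(kd+d+1)\log(1/\delta)\;\lesssim\;\rho_k\log(n/\delta).
\]
For the brackets we would use the standard trick of bracketing the Gaussian densities by $\exp$ of a quadratic with slightly perturbed variance. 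Alternatively, we work with $\bar p=(p+p_0)/2$ as in van de Geer's Theorem 7.4 and thereby avoid envelope issues.

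\textbf{Step 2 (rate).} The entropy integral is $J(\delta)=\int_{\delta^2}^{\delta}H_B^{1/2}(u)\,du\lesssim\delta\sqrt{\rho_k\log n}$ for $\delta\ge n^{-1}$. The condition $J(\delta_n)\lesssim\sqrt n\,\delta_n^2$ then yields $\delta_n^2\asymp\rho_k\log n/n$. Van de Geer's exponential inequality gives $\Pbb\bigl(h_n(\widehat p,p_0)>\delta\bigr)\le c\exp(-n\delta^2/c)$ for $\delta\ge\delta_n$. Taking $\delta^2=C\rho_k\log n/n$ with $C$ large, this probability is at most $c_1 n^{-c_2}$. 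Lemma~\ref{lem:gaussian-hellinger} then gives \eqref{eq:fn-rate}, and it gives $|\widehat\sigma^2_{n,k}-\sigma_0^2|^2\lesssim\rho_k\log n/n$, which is \eqref{eq:sigma-rate}.

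\textbf{Step 3 (likelihood).} We compute directly, writing $\varepsilon_i=Y_i-f^0(t_i)$. For $\sigma^2=\widehat\sigma^2_{n,k}$ and $f=\widehat f^{(k)}_n$, the normalized log-likelihood difference equals
\[
-\tfrac d2\log\tfrac{\sigma^2}{\sigma_0^2}-\tfrac{1}{2\sigma^2}\Bigl(\tfrac1n\sum\|\varepsilon_i\|^2+\|f-f^0\|_n^2-\tfrac2n\sum\langle\varepsilon_i,f(t_i)-f^0(t_i)\rangle\Bigr)+\tfrac1{2\sigma_0^2}\tfrac1n\sum\|\varepsilon_i\|^2 .
\]
We split this into a signal part and a variance part. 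The signal part is at most $\sup_{f\in\mathcal F_k}\bigl\{\tfrac1n\sum\langle\varepsilon_i,f-f^0\rangle-\tfrac12\|f-f^0\|_n^2\bigr\}/\sigma^2$. This supremum is a maximum over $n^{k-1}$ linear least-squares projections of dimension $(k+1)d$. Each projection contributes a $\chi^2_{(k+1)d}/n$ variable, and a union bound gives $O(\rho_k\log n/n)$ with polynomially small failure probability. The variance part is
\[
g(\sigma^2)=-\tfrac d2\log(\sigma^2/\sigma_0^2)+\bigl(\tfrac1{2\sigma_0^2}-\tfrac1{2\sigma^2}\bigr)S,\qquad S=\tfrac1n\sum\|\varepsilon_i\|^2 .
\]
It is maximized at $\sigma^2=S/d$. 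Its maximum is $O\bigl((S/d-\sigma_0^2)^2\bigr)=O(\log n/n)$ by a $\chi^2_{nd}$ concentration bound.

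The main obstacle is Step~1: making the bracketing rigorous in $h_n$ with uniform constants. We would try the $\bar p$ device first, since it avoids needing likelihood-ratio envelopes.
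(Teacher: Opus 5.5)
Your proposal is correct. Steps 1--2 follow the paper's route. The paper also covers each fixed grid configuration by a Euclidean net in $(\av,\Vbf,\sigma^2)$ and aggregates over at most $n^{k-1}$ configurations to get $H_B(\epsilon)\lesssim(k-1)\log n+\{d(k+1)+1\}\log(1/\epsilon)$. It then applies van de Geer's Theorem~8.14 with $\delta_n^2\asymp\rho_k\log n/n$ and transfers the result to $\|\cdot\|_n$ and $|\sigma^2-\sigma_0^2|$ via Lemma~\ref{lem:gaussian-hellinger}. One correction to Step 1: the $\bar p$ device does not by itself remove the need for integrable upper brackets. The paper still truncates its sup-norm brackets with a Gaussian-tail envelope $H$; your perturbed-variance Gaussian brackets would serve the same purpose.

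Step 3 is where you take a genuinely different route. The paper stays in the generic MLE framework:
\begin{itemize}
\item it applies van de Geer's Theorem~8.13 to $Z_{i,\theta}=\frac12\log\{(p_{\theta,i}+p_i^0)/(2p_i^0)\}$ over the Hellinger ball of radius $R\asymp(\rho_k\log n/n)^{1/2}$ supplied by Step 2;
\item it uses $\bar A_\theta\le 0$, from nonnegativity of KL;
\item it uses the concavity bound $\frac12\log(\widehat p_i/p_i^0)\le\log(\bar p_i/p_i^0)$ to conclude $\overline{\mathcal L}_n(\widehat f_n^{(k)},\widehat\sigma_{n,k}^2)-\overline{\mathcal L}_n(f^0,\sigma_0^2)\le 4\bar Z_{\widehat\theta}\lesssim\rho_k\log n/n$.
\end{itemize}
You instead expand the Gaussian log-likelihood exactly and split it into two pieces. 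The profiled-variance piece is $\frac d2(r-1-\log r)$ with $r=S/(d\sigma_0^2)$, controlled by $\chi^2_{nd}$ concentration. The signal piece is a maximum of projected chi-squares, controlled by a union bound over the $n^{k-1}$ configurations.

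Your argument is more elementary and gives explicit failure probabilities. It also bounds the likelihood gain uniformly over all $(f,\sigma^2)\in\mathcal F_k\times\Omega$, so it needs neither the Step 2 localization nor a second entropy-integral verification. The paper's argument uses Gaussianity and linearity only through the entropy bound, so it would carry over to other noise families.

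Two details to make explicit when you write it up. First, for a fixed configuration the set $\{f-f^0\}$ is affine rather than linear when $f^0$ is not in that configuration's span. You should therefore project onto its linear span, whose dimension is at most $(k+1)d+1$ rather than $(k+1)d$; this is still $O(\rho_k)$. Second, bounding the signal term after dividing by $\widehat\sigma_{n,k}^2\ge\underline\sigma^2$ is legitimate only because the supremum is nonnegative (take $f=f^0$). Dropping the compactness constraints on $\mathcal V$ and $\mathcal A$ only enlarges that supremum, so it is harmless.
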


We assume that the true representation is well separated: there exist
constants $\underline C_1,\underline C_2>0$ such that
\begin{equation}
\label{eq:min-spacing}
\min_{1\leq j\leq k_0}
\left(
\tau_j^0-\tau_{j-1}^0
\right)
\geq
\underline C_1,
\end{equation}
and
\begin{equation}
\label{eq:min-jump}
\min_{1\leq j\leq k_0-1}
\|V_{j+1}^0-V_j^0\|_2
\geq
\underline C_2.
\end{equation}

\begin{lemma}[Local geometric approximation bounds]
\label{lem:local-geometry}
Fix a true changepoint $\tau_j^0$, where
$j\in\{1,\ldots,k_0-1\}$, and assume that
\eqref{eq:min-jump} is satisfied.

First, for every
$
0<\epsilon<
\frac12
\min\left\{
\tau_j^0-\tau_{j-1}^0,\,
\tau_{j+1}^0-\tau_j^0
\right\},
$
the best $\Rbb^d$-valued affine approximation to $f^0$ on
$[\tau_j^0-\epsilon,\tau_j^0+\epsilon]$ satisfies
\begin{equation}
\label{eq:cubic-local-bound}
\begin{aligned}
\inf_{\ell\in\mathcal L_d}
\int_{\tau_j^0-\epsilon}^{\tau_j^0+\epsilon}
\|f^0(t)-\ell(t)\|_2^2\,dt
&=
\frac{\epsilon^3}{24}
\|V_{j+1}^0-V_j^0\|_2^2\geq
\frac{\underline C_2^2}{24}\epsilon^3,
\end{aligned}
\end{equation}
where
$
\mathcal L_d
:=
\left\{
\ell:[0,1]\to\Rbb^d:
\ell(t)=a+vt,\;
a,v\in\Rbb^d
\right\}
$
is the class of $\Rbb^d$-valued affine functions.

Second, choose fixed points satisfying
$
\frac{\tau_{j-1}^0+\tau_j^0}{2}
<t_1<\tau_j^0<t_2<
\frac{\tau_j^0+\tau_{j+1}^0}{2}.
$
For $\tau\in(t_1,t_2)$, let
$\mathcal G_2(\tau;t_1,t_2)$ denote the class of continuous
$\Rbb^d$-valued piecewise-linear functions on $[t_1,t_2]$ with a
possible changepoint at $\tau$, namely
$$
\mathcal G_2(\tau;t_1,t_2)
:=
\left\{
g:[t_1,t_2]\to\Rbb^d:
g(t)=
\begin{cases}
a+v_1(t-t_1),
& t\in[t_1,\tau],\\[2mm]
a+v_1(\tau-t_1)+v_2(t-\tau),
& t\in[\tau,t_2],
\end{cases}
\quad
a,v_1,v_2\in\Rbb^d
\right\}.
$$
Then there exist constants $c_j>0$ and $\delta_j>0$, independent of
$\tau$, such that, whenever
$|\tau-\tau_j^0|\leq\delta_j$,
\begin{equation}
\label{eq:quadratic-local-bound}
\inf_{g\in\mathcal G_2(\tau;t_1,t_2)}
\int_{t_1}^{t_2}
\|f^0(t)-g(t)\|_2^2\,dt
\geq
c_j
|\tau-\tau_j^0|^2
\|V_{j+1}^0-V_j^0\|_2^2.
\end{equation}
Consequently, under \eqref{eq:min-jump},
\begin{equation}
\label{eq:quadratic-local-bound-min-jump}
\inf_{g\in\mathcal G_2(\tau;t_1,t_2)}
\int_{t_1}^{t_2}
\|f^0(t)-g(t)\|_2^2\,dt
\geq
c_j\underline C_2^2
|\tau-\tau_j^0|^2.
\end{equation}
\end{lemma}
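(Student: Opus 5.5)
The plan has two parts, mirroring the two statements of Lemma~\ref{lem:local-geometry}.

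\textbf{Cubic bound.} Translate so that $\tau_j^0=0$. The window $[-\epsilon,\epsilon]$ meets only segments $j$ and $j+1$, because $\epsilon$ is less than half of each adjacent segment length. Hence on this window
\[
  f^0(t)=b+V_j t+\Delta\,(t)_+,
  \qquad \Delta:=V_{j+1}^0-V_j^0 .
\]
Affine functions are invariant under subtracting $b+V_jt$. The problem therefore reduces to the best affine $L^2$ approximation of $\Delta\,(t)_+$ on $[-\epsilon,\epsilon]$. Because $\Delta$ is a fixed vector, this problem decouples coordinatewise: the optimal $\ell$ is $\Delta$ times the scalar best affine fit $\alpha+\beta t$ of $(t)_+$. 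The minimal value is $\|\Delta\|_2^2$ times the scalar residual.

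For the scalar residual, project $(t)_+$ onto $\mathrm{span}\{1,t\}$, which are orthogonal on the symmetric interval. The projection coefficients are
\[
  \alpha=\frac{\int(t)_+\,dt}{2\epsilon}=\frac{\epsilon}{4},
  \qquad
  \beta=\frac{\int t(t)_+\,dt}{\int t^2\,dt}=\frac{\epsilon^3/3}{2\epsilon^3/3}=\frac12 .
\]
The residual is
\[
  \int(t)_+^2\,dt-2\epsilon\,\alpha^2-\frac{2\epsilon^3}{3}\,\beta^2
  =\frac{\epsilon^3}{3}-\frac{\epsilon^3}{8}-\frac{\epsilon^3}{6}
  =\frac{\epsilon^3}{24}.
\]
Combining with \eqref{eq:min-jump} gives \eqref{eq:cubic-local-bound}.

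\textbf{Quadratic bound, reduction.} On $[t_1,t_2]$, $f^0$ again has the form $b+V_jt+\Delta(t-\tau_j^0)_+$, since $t_1$ and $t_2$ lie within the two adjacent segments. Subtracting the affine part, which belongs to $\mathcal G_2(\tau;t_1,t_2)$, and using that the class consists of functions $a+v_1(t-t_1)+w(t-\tau)_+$ with $w=v_2-v_1$ free, the infimum becomes
\[
  \|\Delta\|_2^2\, D(\tau),
  \qquad
  D(\tau):=\operatorname{dist}^2_{L^2[t_1,t_2]}\bigl((t-\tau_j^0)_+,\ \mathrm{span}\{1,t,(t-\tau)_+\}\bigr).
\]
This uses the same coordinatewise decoupling: each coordinate is $\Delta_m$ times the scalar problem.

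\textbf{Quadratic bound, main step.} It remains to show $D(\tau)\ge c_j|\tau-\tau_j^0|^2$ for $|\tau-\tau_j^0|\le\delta_j$; this is the main obstacle. The function $D$ is continuous, $D(\tau_j^0)=0$, and $D(\tau)>0$ for $\tau\ne\tau_j^0$, because hinges at distinct interior knots are linearly independent. The rate requires a second-order expansion. I would compute $D(\tau)$ explicitly as a Gram-determinant ratio $\det G(\phi_1,\dots,\phi_4)/\det G(\phi_1,\phi_2,\phi_3)$, with
\[
  \phi=\bigl(1,\ t,\ (t-\tau)_+,\ (t-\tau_j^0)_+\bigr).
\]
Expand in $h=\tau-\tau_j^0$. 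Write $(t-\tau_j^0)_+=(t-\tau)_+ +h\,\mathbf 1\{t>\tau\}+r_h(t)$, where $r_h$ is supported on an interval of length $|h|$ and has size $O(|h|)$, so that $\|r_h\|_{L^2}^2=O(|h|^3)$. Then
\[
  D(\tau)\ \ge\ \bigl(h\,d_0-O(|h|^{3/2})\bigr)^2,
  \qquad
  d_0:=\operatorname{dist}\bigl(\mathbf 1\{t>\tau\},\ \mathrm{span}\{1,t,(t-\tau)_+\}\bigr).
\]
Here $d_0>0$ because a step function is not continuous. Moreover $d_0$ is bounded below uniformly for $\tau$ near $\tau_j^0$ by continuity in $\tau$, since $\tau$ stays strictly inside $(t_1,t_2)$. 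Choosing $\delta_j$ small so that the $O(|h|^{3/2})$ term is at most $h\,d_0/2$ gives $D(\tau)\ge (d_0^2/4)\,h^2$. Take $c_j=d_0^2/4$; it depends only on $t_1,t_2,\tau_j^0$, hence is independent of $\tau$. Applying \eqref{eq:min-jump} then yields \eqref{eq:quadratic-local-bound-min-jump}.
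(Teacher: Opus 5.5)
Your proposal is correct, and for the cubic bound it matches the paper's computation. The paper parametrizes $\ell(\tau+s)=f^0(\tau)+a+vs$, expands the loss as $2\epsilon\|a\|_2^2-\epsilon^2\langle a,V_+-V_-\rangle+\frac{\epsilon^3}{3}\{\|v-V_-\|_2^2+\|v-V_+\|_2^2\}$, and minimizes directly in the vector parameters; your orthogonal projection onto $\{1,t\}$ after decoupling coordinates is the same computation.

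For the quadratic bound you take a genuinely different route. The paper never reduces to a scalar projection problem. It keeps the vector residual $g-f^0$, which is affine on $[t_1,\tau]$, $[\tau,\tau_j^0]$ and $[\tau_j^0,t_2]$. The endpoint inequality $\int_u^v\|r\|_2^2\ge\frac{v-u}{6}(\|r(u)\|_2^2+\|r(v)\|_2^2)$ then bounds the loss below by a constant times $\|y\|_2^2+\|z\|_2^2+\|w\|_2^2$, the residual values at $\tau,\tau_j^0,t_2$. Because $g$ has a single velocity on $[\tau,t_2]$ while $f^0$ changes velocity at $\tau_j^0$, these values satisfy the linear constraint $(b+\eta)z-by-\eta w=\eta b(V_{j+1}^0-V_j^0)$. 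Cauchy--Schwarz then gives the $\eta^2$ bound directly.

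That argument is elementary and gives explicit constants. It needs no perturbation or compactness, and $\delta_j$ is limited only by $\frac12\min\{\tau_j^0-t_1,t_2-\tau_j^0\}$. The price is a case split on the sign of $\tau-\tau_j^0$.

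Your route buys three things. You get the exact identity $\inf=\|V_{j+1}^0-V_j^0\|_2^2D(\tau)$. You handle both signs of $h$ at once. And you explain why the rate is quadratic: moving a kink by $h$ is, to first order, adding $h$ times a jump, which continuous piecewise-linear functions cannot absorb. The cost is that $\delta_j$ must shrink to absorb the remainder.

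Two small clean-ups:
\begin{itemize}
\item The bound should read $D(\tau)\ge(|h|d_0-\|r_h\|)^2$, with $\|r_h\|=|h|^{3/2}/\sqrt3$. It is valid once the base is nonnegative.
\item You can drop the continuity argument for $d_0$ altogether. Optimizing over the common value at $\tau$ of the two affine pieces gives the explicit value $d_0(\tau)^2=\frac{(\tau-t_1)(t_2-\tau)}{4(t_2-t_1)}$.
\end{itemize}
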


\begin{theorem}[Consistency of the penalized CPL estimator]
\label{thm:consistency}
Suppose that the conditions of Lemma~\ref{lem:rates} hold, that
\eqref{eq:min-spacing}--\eqref{eq:min-jump} are satisfied, that
$\overline{k}\ge k_0$, and that $\gamma>1$. Then the penalized
maximum-likelihood estimator defined in \eqref{eq:pen-MLE} satisfies
\begin{equation}\label{eq:consistency}
  \Pbb\!\left(
    \widehat{k}_n=k_0
    \ \text{and}\
    \max_{1\le j\le k_0-1}
    \left|
      \widehat{\tau}_{n,j}-\tau_j^0
    \right|
    \le
    C\left(\frac{\log n}{n}\right)^{1/2}
  \right)
  \longrightarrow 1
\end{equation}
as $n\to\infty$, where $C>0$ does not depend on $n$.
\end{theorem}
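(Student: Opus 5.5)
The plan has three parts: rule out overfitting, rule out underfitting, and then localize the changepoints in two stages. Throughout we work on the intersection of the high-probability events of Lemma~\ref{lem:rates} for every $k$ with $k_0\le k\le\overline k$. These are finitely many events, so the intersection still has probability at least $1-\overline k c_1 n^{-c_2}\to1$.

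\emph{Overfitting.} Take $k_0<k\le\overline k$. By \eqref{eq:llh-rate}, applied to both $k$ and $k_0$, the doubled log-likelihood gain of model $k$ over model $k_0$ is at most
\[
2n\bigl[\overline{\mathcal L}_n(\widehat f^{(k)}_n,\widehat\sigma^2_{n,k})-\overline{\mathcal L}_n(f^0,\sigma_0^2)\bigr]\le 2C\rho_k\log n .
\]
The extra penalty is $(\rho_k-\rho_{k_0})(\log n)^\gamma\ge(d+1)(\log n)^\gamma$. Since $\gamma>1$, the penalty eventually dominates, so $\widehat k_n\le k_0$ with probability tending to one.

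\emph{Underfitting.} Take $k<k_0$. Any $f\in\mathcal F_k$ has at most $k-1<k_0-1$ changepoints. By \eqref{eq:min-spacing}, some true $\tau_j^0$ then has an interval $[\tau_j^0-\epsilon,\tau_j^0+\epsilon]$, with $\epsilon=\underline C_1/4$, free of changepoints of $f$. The first part of Lemma~\ref{lem:local-geometry} turns this into the deterministic bound
\[
\inf_{f\in\mathcal F_k}\|f-f^0\|_n^2\ge c\,\underline C_2^2\underline C_1^3>0 ,
\]
after a Riemann-sum approximation with $O(1/n)$ error. Next write $Y_i=f^0(t_i)+\varepsilon_i$. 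The profile likelihood satisfies
\[
\sup_{\sigma^2\in\Omega}\overline{\mathcal L}_n(f,\sigma^2)\approx-\tfrac d2\log\bigl(\|Y-f\|_n^2/d\bigr),
\]
up to the constraint $\sigma^2\in\Omega$, which only lowers it further. Moreover
\[
\|Y-f\|_n^2=\|\varepsilon\|_n^2+\|f-f^0\|_n^2-\tfrac2n\sum_i\langle\varepsilon_i,f(t_i)-f^0(t_i)\rangle .
\]
We need the cross term to be $o_{\Pbb}(1)$ uniformly over $\mathcal F_k$. This follows from the entropy bound behind Lemma~\ref{lem:rates}: $\mathcal F_k$ is a finite union over $O(n^{k-1})$ changepoint configurations of bounded finite-dimensional linear classes, and a Gaussian maximal inequality gives a rate $O_{\Pbb}(\sqrt{\log n/n})$. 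Hence the best likelihood in model $k$ falls short of $\overline{\mathcal L}_n(f^0,\sigma_0^2)$ by a constant $\eta>0$. That is a loss of order $n$, which swamps the penalty difference $O((\log n)^\gamma)$. Therefore $\widehat k_n=k_0$ with probability tending to one.

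\emph{Localization, stage one.} On $\{\widehat k_n=k_0\}$, \eqref{eq:fn-rate} gives $\|\widehat f_n-f^0\|_n^2\le C\log n/n$. Suppose some true $\tau_j^0$ has no estimated changepoint within distance $\epsilon$. Then $\widehat f_n$ is affine on $[\tau_j^0-\epsilon,\tau_j^0+\epsilon]$. Combining \eqref{eq:cubic-local-bound} with the Riemann-sum comparison (valid since $\epsilon\gg1/n$) gives $\|\widehat f_n-f^0\|_n^2\ge c\epsilon^3$. Taking $\epsilon=M(\log n/n)^{1/3}$ with $M$ large contradicts the rate. So every true changepoint has an estimated changepoint within $M(\log n/n)^{1/3}$. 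Because there are exactly $k_0-1$ estimated changepoints and the true ones are $\underline C_1$-separated, this matching is one-to-one and respects the ordering. In particular, each $\widehat\tau_{n,j}$ is the unique estimated changepoint in the fixed window $[t_1,t_2]$ of the second part of Lemma~\ref{lem:local-geometry}.

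\emph{Localization, stage two, and the main obstacle.} Restricted to $[t_1,t_2]$, $\widehat f_n$ belongs to $\mathcal G_2(\widehat\tau_{n,j};t_1,t_2)$, and for large $n$ we have $|\widehat\tau_{n,j}-\tau_j^0|\le\delta_j$. Then \eqref{eq:quadratic-local-bound-min-jump}, together with a discretization step, gives
\[
C\frac{\log n}{n}\ge\|\widehat f_n-f^0\|_n^2\ge c_j\underline C_2^2|\widehat\tau_{n,j}-\tau_j^0|^2 - O(n^{-1}).
\]
This yields $|\widehat\tau_{n,j}-\tau_j^0|\le C'(\log n/n)^{1/2}$, and taking the maximum over the finitely many $j$ completes \eqref{eq:consistency}. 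The delicate step is the discretization: we must replace the integral by the empirical norm uniformly over the data-dependent $g\in\mathcal G_2$, with error $o(\log n/n)$ relative to the bound. Here $\|f^0-g\|^2$ is a piecewise quadratic with at most two kinks on the window and bounded coefficients, because $\mathcal V$ and $\mathcal A$ are compact and $\widehat f_n$ inherits these bounds. A Riemann-sum error of $O(1/n)$ therefore holds uniformly, and this is $o(\log n/n)$.
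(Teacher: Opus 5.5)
Your proposal is correct and follows essentially the same route as the paper. Overfitting is excluded because the likelihood rate of Lemma~\ref{lem:rates} is beaten by the $(\log n)^\gamma$ penalty. Underfitting is excluded by the cubic bound combined with a uniform $O_{\Pbb}(\sqrt{\log n/n})$ noise cross-term and profiling over $\sigma^2$. Localization is then the two-stage $(\log n/n)^{1/3}\to(\log n/n)^{1/2}$ argument from the two parts of Lemma~\ref{lem:local-geometry}, with an $O(1/n)$ Riemann-sum correction. The differences are only presentational: you argue deterministically on the intersection of the high-probability events of Lemma~\ref{lem:rates}, where the paper uses a subsequence contradiction and conditions on $\{\widehat k_n=k_0\}$, and you compare underfitted profile likelihoods directly with $\overline{\mathcal L}_n(f^0,\sigma_0^2)$ rather than through $\widehat R_{n,k_0}\to\sigma_0^2$.
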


\begin{remark}
The condition $\gamma>1$ ensures that the additional penalty for an
overfitted model dominates the stochastic likelihood gain obtained by
adding spurious segments. This is analogous to the role of strengthened
logarithmic penalties in
\citet{fryzlewicz2014segmentation,Piotr2014WBS,
BaranowskiFryzlewicz2019}.
We do not claim that $\gamma>1$ is necessary for every formulation of
the change-in-velocity problem; it is the sufficient condition under
which the present penalized maximum-likelihood consistency result is
established. 
\end{remark}

\begin{remark}
The compactness assumptions on $\mathcal{V}$ and $\mathcal{A}$ are standard
regularity conditions for the consistency analysis. In the
intracellular-transport setting motivating CPLASS, boundedness of
$\mathcal{V}$ is also scientifically natural because particle speeds are
bounded in practice. The companion methods paper
\citet{Do2025-vv} incorporates this information through a soft
speed-control penalty, whereas the present analysis studies the corresponding
bounded-velocity parameterization. Thus, the theorem concerns the global
sSIC-penalized estimator over a compact velocity space rather than the
finite-iteration stochastic-search output with the soft speed penalty.
\end{remark}

\paragraph{Proof strategy.}

Lemma~\ref{lem:rates} is proved in Appendix~\ref{sec:S-proof-rates} using
empirical-process arguments. For each fixed $k$, the Hellinger
bracketing entropy of the CPL model class satisfies
$$H_B(\epsilon)
\leq
C_B
\left\{
(k-1)\log n
+
\bigl[d(k+1)+1\bigr]\log(1/\epsilon)
\right\}.$$

The first term accounts for the discrete choices of the $k-1$
changepoint locations on the observation grid, while the second arises
from covering the continuous initial-position, velocity, and variance
parameters.

Applying Theorem~8.14 of \citet{Vandegeer} gives
$
\bar h_n^2\!\left(
p_{\widehat f_n^{(k)},\widehat\sigma_{n,k}^2},
p_{f^0,\sigma_0^2}
\right)
=
O_{\Pbb}\!\left(
\dfrac{\rho_k\log n}{n}
\right).
$
The Gaussian Hellinger equivalence in
Lemma~\ref{lem:gaussian-hellinger} then yields
\eqref{eq:fn-rate}--\eqref{eq:sigma-rate}, while
Theorem~8.13 of \citet{Vandegeer} gives the likelihood bound
\eqref{eq:llh-rate}. The discrete and continuous entropy contributions
are summarized by
$
\rho_k=d(k+1)+k.
$

The proof of Theorem~\ref{thm:consistency} has three parts. First, for
every fixed $k>k_0$,
$$
2\left\{
\overline{\mathcal L}_n
\!\left(
\widehat f_n^{(k)},\widehat\sigma_{n,k}^2
\right)
-
\overline{\mathcal L}_n
\!\left(
\widehat f_n^{(k_0)},\widehat\sigma_{n,k_0}^2
\right)
\right\}
=
O_{\Pbb}\!\left(
\frac{\log n}{n}
\right).
$$
By contrast, the normalized penalty difference is
$
(\rho_k-\rho_{k_0})
\frac{(\log n)^\gamma}{n},
$
which dominates $\log n/n$ when $\gamma>1$. Since $\overline k$ is
fixed, this excludes all overfitted models simultaneously.

Second, any model with fewer than $k_0$ segments must omit at least one
true changepoint. By the minimum-spacing condition, one can choose
pairwise disjoint fixed neighborhoods around the true changepoints. If
an underfitted signal has no changepoint in one such neighborhood, it is
affine there. The cubic one-segment approximation bound in
Lemma~\ref{lem:local-geometry} therefore gives a fixed positive
population approximation gap:
$$
\inf_{k<k_0}
\inf_{f\in\mathcal F_k}
\int_0^1
\|f(t)-f^0(t)\|_2^2\,dt
\geq
\Delta_0
$$
for some $\Delta_0>0$. A uniform Riemann-sum approximation transfers
this gap to the empirical norm, while the uniform stochastic
cross-term bound established in Appendix~\ref{sec:S-proof-consistency} yields
$
\inf_{k<k_0}
\frac{1}{dn}
\sum_{i=1}^n
\left\|
Y_i-\widehat f_n^{(k)}(t_i)
\right\|_2^2
\geq
\sigma_0^2+c_0
$
with probability tending to one, for some $c_0>0$. After profiling over
the variance parameter, this produces a fixed positive likelihood gap
between every underfitted model and the true model. Because the
corresponding penalty difference is only of order
$(\log n)^\gamma/n$, underfitting is excluded.

Finally, conditional on selecting $k_0$ segments, changepoint
localization follows from the two geometric bounds in
Lemma~\ref{lem:local-geometry}. Lemma~\ref{lem:rates} and the cubic
one-segment approximation bound \eqref{eq:cubic-local-bound} first give
$
\max_{1\leq j\leq k_0-1}
\left|
\widehat\tau_{n,j}-\tau_j^0
\right|
=
O_{\Pbb}\!\left\{
\left(\dfrac{\log n}{n}\right)^{1/3}
\right\}.
$
This preliminary rate establishes consistency and, with probability
tending to one, a one-to-one correspondence between the estimated and
true changepoints. On a fixed neighborhood $[t_1,t_2]$ of each true
changepoint, the true and fitted signals then each have exactly two
segments. Applying the quadratic two-segment approximation bound
\eqref{eq:quadratic-local-bound} gives
$
\max_{1\leq j\leq k_0-1}
\left|
\widehat\tau_{n,j}-\tau_j^0
\right|
=
O_{\Pbb}\!\left\{
\left(\frac{\log n}{n}\right)^{1/2}
\right\}.
$

Full proofs are provided in the appendices.
\section*{Organization of the appendices}

The appendices contain the complete proofs of
Lemmas~\ref{lem:gaussian-hellinger}--\ref{lem:local-geometry} and
Theorem~\ref{thm:consistency}, including the Hellinger bracketing-entropy
bounds, empirical-process concentration arguments, and the complete
two-stage geometric derivation of the changepoint-location rate.

\appendix
\setcounter{theorem}{0}\setcounter{equation}{0}\setcounter{remark}{0}\setcounter{definition}{0}
\renewcommand{\thetheorem}{S.\arabic{theorem}}
\renewcommand{\theequation}{S.\arabic{equation}}
\renewcommand{\theremark}{S.\arabic{remark}}
\renewcommand{\thedefinition}{S.\arabic{definition}}
\renewcommand{\theHtheorem}{S.\arabic{theorem}}
\renewcommand{\theHequation}{S.\arabic{equation}}
\renewcommand{\theHremark}{S.\arabic{remark}}
\renewcommand{\theHdefinition}{S.\arabic{definition}}

\section{Notation and model recap}\label{sec:S-model}

We recall the setup from the main text for convenience.
Observations $(Y_i)_{i=1}^n\subset\Rbb^d$ at times $t_i=i/n$ satisfy
\begin{equation}\label{eq:S-true-model}
  Y_i \overset{\mathrm{ind.}}{\sim}
  \mathcal{N}_d(f^0(t_i),\,\sigma_0^2 I_d),
\end{equation}
where $f^0=f_{\tbf^0,\Vbf^0,\av^0}\in\mathcal{F}_{k_0}$ is the true
continuous piecewise-linear (CPL) signal with $k_0$ segments,
changepoints $\tbf^0=(\tau_1^0,\dots,\tau_{k_0-1}^0)$,
velocities $\Vbf^0=(V_1^0,\dots,V_{k_0}^0)\in\mathcal{V}^{k_0}$,
and initial position $\av^0\in\mathcal{A}$.
The sets $\mathcal{V},\mathcal{A}\subset\Rbb^d$ are compact and
$\sigma^2\in\Omega=[\underline{\sigma}^2,\overline{\sigma}^2]\subset(0,\infty)$.
The CPL signal class $\mathcal{F}_k$ and the penalized MLE
$(\widehat{f}_n,\widehat{\sigma}_n^2,\widehat{k}_n)$ are defined
in Section~2 of the main text.

We write:
\begin{itemize}\setlength\itemsep{2pt}
  \item $p_{\tbf,\Vbf,\av,\sigma^2}^{(n)}
    :=\bigotimes_{i=1}^n\mathcal N_d\!\left(\,\cdot\mid
    f_{\tbf,\Vbf,\av}(t_i),\sigma^2I_d\right)$,
    \quad $p_0^{(n)}:=\bigotimes_{i=1}^n\mathcal N_d\!\left(\,\cdot\mid
    f^0(t_i),\sigma_0^2I_d\right)$.
  \item $\bar h_n^2(\mathbf p,\mathbf q)
:=
n^{-1}\sum_{i=1}^n h^2(p_i,q_i)$ for the
    average squared Hellinger process distance
    \citep{Vandegeer,gine2021mathematical}.
  \item $\widehat{f}_n^{(k)}$ denotes
    $f_{\widehat{\tbf}^{n,k},\widehat{\Vbf}^{n,k},\widehat{\av}^{n,k}}$,
    and $\|f-g\|_n^2=n^{-1}\sum_i\|f(t_i)-g(t_i)\|_2^2$.
  \item $\overline{\mathcal{L}}_n(f,\sigma^2)=n^{-1}\sum_i
    \log\mathcal{N}_d(Y_i\mid f(t_i),\sigma^2I_d)$.
\end{itemize}

For a fixed candidate model size $k$, let
$\widehat\theta=(\widehat{\tbf}^{n,k},\widehat{\Vbf}^{n,k},
\widehat{\av}^{n,k},\widehat\sigma_{n,k}^2)$ denote the MLE over the
corresponding parameter space. Thus,
$p_{\widehat\theta}^{(n)}=\bigotimes_{i=1}^n
\mathcal N_d(\,\cdot\mid\widehat f_n^{(k)}(t_i),
\widehat\sigma_{n,k}^2I_d)$.

\paragraph{Asymptotic notation.}
For sequences $(a_n),(b_n)$: $a_n\lesssim b_n$ (or $a_n=O(b_n)$) if
$a_n\le Cb_n$ for a constant $C$ not depending on $n$;
$a_n\asymp b_n$ if $a_n\lesssim b_n$ and $b_n\lesssim a_n$;
$a_n=o(b_n)$ if $a_n/b_n\to0$.
For densities $p,q$:
$h^2(p,q)=\frac12\int(\sqrt p-\sqrt q)^2$,
$V(p,q)=\frac12\int|p-q|$, and
$\mathrm{KL}(p\|q)=\int p\log(p/q)$.
These satisfy $h^2(p,q)\le V(p,q)\le\sqrt2\,h(p,q)$ and
$V(p,q)\le\{\mathrm{KL}(p\|q)/2\}^{1/2}$.

The penalized estimator can equivalently be obtained by first computing
\begin{equation}\label{eq:S-each-MLE}
  (\widehat f_n^{(k)},\widehat\sigma_{n,k}^2)
  \in\argmax_{f\in\mathcal F_k,\,\sigma^2\in\Omega}
  \sum_{i=1}^n\log\mathcal N_d(Y_i\mid f(t_i),\sigma^2I_d),
\end{equation}
for each $1\le k\le\overline k$, and then selecting
\begin{equation}\label{eq:S-sSIC}
  \widehat k_n\in\argmax_{1\le k\le\overline k}
  \left\{2\overline{\mathcal L}_n(\widehat f_n^{(k)},
  \widehat\sigma_{n,k}^2)-\rho_k\frac{(\log n)^\gamma}{n}\right\},
  \qquad \rho_k=d(k+1)+k.
\end{equation}

\begin{remark}[Off-grid true changepoints]
\label{rem:S-off-grid}

Suppose that a true changepoint $\tau_j^0$ does not lie on the
observation grid, and let $\tau_{j,n}^0$ be a nearest grid point, so that
$
|\tau_{j,n}^0-\tau_j^0|
\leq
\dfrac{1}{n}.
$
Let $f_n^0$ be the signal obtained from $f^0$ by replacing each
$\tau_j^0$ with $\tau_{j,n}^0$, while retaining the same initial
position and velocities. By the hinge representation,
$$
f_n^0(t)-f^0(t)
=
\sum_{j=1}^{k_0-1}
(V_{j+1}^0-V_j^0)
\left\{
(t-\tau_{j,n}^0)_+
-
(t-\tau_j^0)_+
\right\}.
$$
Because the positive-part map is $1$-Lipschitz,
$$
\begin{aligned}
\sup_{t\in[0,1]}
\|f_n^0(t)-f^0(t)\|_2
&\leq
\sum_{j=1}^{k_0-1}
\|V_{j+1}^0-V_j^0\|_2
|\tau_{j,n}^0-\tau_j^0|\\
&\leq
\frac{1}{n}
\sum_{j=1}^{k_0-1}
\|V_{j+1}^0-V_j^0\|_2
\leq
\frac{C}{n},
\end{aligned}
$$
where $C<\infty$ does not depend on $n$. Consequently,
$$
\|f_n^0-f^0\|_n^2
\leq
\sup_{t\in[0,1]}
\|f_n^0(t)-f^0(t)\|_2^2
\leq
\frac{C^2}{n^2}.
$$
Thus, nearest-grid discretization contributes
$O(n^{-2})$ empirical squared error, which is negligible relative to
the $O(\log n/n)$ estimation rate.
\end{remark}
\section{Empirical process tools}\label{sec:S-EPT}

 Given a parameter space
$
    \Theta
    \subseteq
    \mathcal T^{k-1}\times\mathcal V^k\times\mathcal A\times\Sigma
$
and product densities indexed by
$
    \theta=(\tbf,\Vbf,\av,\sigma^2)\in\Theta,
$
we first define the Hellinger covering number and Hellinger bracketing
entropy.

\begin{definition}\label{def:covering-number}
    For $\delta > 0$, let $N_B(\delta, \Theta)$ be the smallest integer $N$ such that there exists a collection of non-negative functions $\{\mathbf{p}_j^L, \mathbf{p}_j^U \}_{j=1}^{N}$ with $\mathbf{p}_j^{L} = (p_{j1}^{L}, \dots, p_{jn}^{L})$ and  $\mathbf{p}_j^{U} = (p_{j1}^{U}, \dots, p_{jn}^{U})$ such that for every $\theta \in \Theta$, there is a $j$ such that
    \begin{itemize}
        \item[(i)] $\bar h_n\left(
\dfrac{\mathbf p_j^L+\mathbf p_0^{(n)}}{2},
\dfrac{\mathbf p_j^U+\mathbf p_0^{(n)}}{2}
\right)
\le \delta$ and
        \item[(ii)] $p_{ji}^{L}(Y_i) \leq \mathcal{N}_d(Y_i|f_{\tbf,\Vbf,\av}(t_i),\sigma^2I_d)\leq p_{ji}^{U}(Y_i)$
        for all $Y_i\in \Rbb^d$ and $i\in [n]$.
    \end{itemize}
    Then $N_B(\delta, \Theta)$ and $H_B(\delta, \Theta)= \log N_B(\delta, \Theta)$ are called the Hellinger covering number and entropy number with bracketing, respectively. When $\Theta = \mathcal{T}^{k-1}\times \mathcal{V}^{k}\times \mathcal{A}\times \Sigma$, we write those numbers as $N_B(\delta)$ and $H_B{\delta}$ for short.
\end{definition}

The entropy integral is
$J_B(\delta)=\int_{\delta^2/c_0}^{\delta}H_B^{1/2}(u,\Theta)\,du\vee\delta$
for a universal constant $c_0$.

We state the two main theorems from \citet{Vandegeer}, with notation
adapted to our model.

\begin{theorem}[{\citet[Thm.~8.14]{Vandegeer}}]\label{thm:S-vdG-8-14}
Suppose $\Psi(\delta)\ge J_B(\delta)$ with $\Psi(\delta)/\delta^2$
non-increasing. If $\sqrt n\delta_n^2\ge c\Psi(\delta_n)$ for universal
$c>0$, then for all $\delta\ge\delta_n$,
\begin{equation}\label{eq:S-vdG-8-14}
  \Pbb_0\!\left(\overline h_n(p_{\widehat\theta}^{(n)},p_0^{(n)})
  \ge\delta\right)\le c\exp\!\left(-\frac{n\delta^2}{c^2}\right).
\end{equation}
\end{theorem}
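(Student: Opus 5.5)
The statement is the rate theorem of \citet{Vandegeer} for maximum likelihood with independent, non-identically distributed observations, so I would reproduce the standard argument, adapted to the product-density notation of Definition~\ref{def:covering-number}. The first step is the \emph{basic inequality}. Write $\bar p_\theta^{(n)}:=(p_\theta^{(n)}+p_0^{(n)})/2$ coordinatewise. Since $\widehat\theta$ maximizes the likelihood, $\sum_i\log(p_{\widehat\theta,i}/p_{0,i})(Y_i)\ge 0$. Concavity of the logarithm then gives
\[
  0\le \sum_{i=1}^n \log\frac{\bar p_{\widehat\theta,i}}{p_{0,i}}(Y_i).
\]
Next, use the elementary bound $\mathbb E_0\log(\bar p/p_0)\le -2h^2(\bar p,p_0)$, together with $h^2(p_\theta,p_0)\le 16\,h^2(\bar p_\theta,p_0)$. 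These yield
\[
  \bar h_n^2\bigl(\bar p^{(n)}_{\widehat\theta},p_0^{(n)}\bigr)
  \lesssim n^{-1/2}\,\nu_n\bigl(\widehat\theta\bigr),
  \qquad
  \nu_n(\theta):=\frac{1}{\sqrt n}\sum_{i=1}^n\Bigl\{g_{\theta,i}(Y_i)-\mathbb E_0 g_{\theta,i}(Y_i)\Bigr\},
\]
where $g_{\theta,i}:=\tfrac12\log(\bar p_{\theta,i}/p_{0,i})$. The point of the averaging with $p_0$ is that $g_\theta$ is bounded below by $-\tfrac12\log 2$, which is what allows a Bernstein-type inequality without envelope conditions.

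The second step is a \emph{peeling argument}. On the event $\{\bar h_n(p_{\widehat\theta}^{(n)},p_0^{(n)})\ge\delta\}$, the quantity $\bar h_n(\bar p_{\widehat\theta}^{(n)},p_0^{(n)})$ lies in one of the shells $S_s:=[2^s\delta/4,\,2^{s+1}\delta/4)$, $s\ge0$. On shell $s$ the basic inequality forces
\[
  \sup_{\theta\in S_s}\nu_n(\theta)\gtrsim \sqrt n\,(2^s\delta)^2 .
\]
Thus it suffices to bound each probability $\Pbb_0\bigl(\sup_{\bar h_n\le 2^{s+1}\delta}\nu_n(\theta)\ge c'\sqrt n\,2^{2s}\delta^2\bigr)$ and then sum the resulting bounds over $s$.

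The third step, which I expect to be the main obstacle, is a \emph{uniform deviation inequality} for $\nu_n$ over a Hellinger ball of radius $R$. The claim is
\[
  \Pbb_0\Bigl(\sup_{\bar h_n(\bar p_\theta,p_0)\le R}\nu_n(\theta)\ge a\Bigr)\le c\exp\!\Bigl(-\frac{a^2}{c^2R^2}\Bigr),
\]
valid whenever $a\ge c\,J_B(R)$ and $a\le c^{-1}\sqrt n R^2$. I would prove it by chaining with brackets: take bracketing sets at scales $R2^{-m}$, and bound $\log$ of the upper brackets of $\bar p_\theta$ (which are themselves averaged with $p_0$, as Definition~\ref{def:covering-number}(i) specifies). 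The Bernstein-norm control $\rho_K(g)^2\lesssim h^2$ for functions of the form $\tfrac12\log(\bar p/p_0)$ then gives sub-Gaussian increments at each level. The truncation and adaptive stopping of the chain near scale $R^2$ is why the lower limit of $J_B$ is $\delta^2/c_0$.

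Finally, apply this inequality with $R=2^{s+1}\delta$ and $a\asymp\sqrt n\,2^{2s}\delta^2$. The requirement $a\ge cJ_B(R)$ follows from $\sqrt n\delta_n^2\ge c\Psi(\delta_n)$ and the monotonicity of $\Psi(\delta)/\delta^2$, since for $R\ge\delta\ge\delta_n$,
\[
  \Psi(R)\le \frac{R^2}{\delta_n^2}\,\Psi(\delta_n)\le \frac{\sqrt n}{c}\,R^2 .
\]
Each shell therefore contributes at most $c\exp(-n2^{2s}\delta^2/c^2)$. Summing this geometric-type series over $s\ge0$ gives the bound $c\exp(-n\delta^2/c^2)$ after enlarging $c$, which is exactly \eqref{eq:S-vdG-8-14}.
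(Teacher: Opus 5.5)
Your outline is correct and is essentially the proof of the cited result. The paper itself gives no argument, only the citation to van de Geer, and your route is the one used there: the basic inequality for $\bar p=(p+p_0)/2$ (which uses that $p_0$ lies in the model), peeling over Hellinger shells with the enlarged radius $R=2^{s+1}\delta\ge\delta_n$, and a Bernstein-bracketing deviation bound. That deviation bound is exactly Theorem~\ref{thm:S-vdG-8-13}, so your third step can simply invoke it with $b\asymp\sqrt n R^2$. Note that this is the only regime in which the integral from $b/(2^6\sqrt n)$ is dominated by $J_B(R)$ with lower limit $R^2/c_0$. Your stated range $cJ_B(R)\le a\le c^{-1}\sqrt n R^2$ is therefore slightly too generous, but this is harmless because you only ever use $a\asymp\sqrt n R^2$.
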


For $\theta\in\Theta$, define $\bar p_{\theta,i}=(p_{\theta,i}+p_{0,i})/2$
and $\bar p_\theta^{(n)}=\bigotimes_{i=1}^n\bar p_{\theta,i}$.

\begin{theorem}[{\citet[Thm.~8.13]{Vandegeer}}]\label{thm:S-vdG-8-13}
Let $R,D,A_1,b>0$ and $\Theta$ satisfy
$\overline h_n(\bar p_\theta^{(n)},p_0^{(n)})\le R$ for all
$\theta\in\Theta$;
$b\le A_1\sqrt nR^2\wedge8\sqrt nR$; and
$b\ge\sqrt{D^2(A_1+1)}\bigl(\int_{b/(2^6\sqrt n)}^R
H_B^{1/2}(u/\sqrt2,\Theta)\,du\vee R\bigr)$.
With
\begin{equation*}
 Z_{i,\theta}=\tfrac12\log\frac{p_{\theta,i}(Y_i)+p_{0,i}(Y_i)}
 {2p_{0,i}(Y_i)},
\end{equation*}
$\bar Z_\theta=n^{-1}\sum_iZ_{i,\theta}$ and
$\bar A_\theta=n^{-1}\sum_i\mathbb E_0Z_{i,\theta}$,
\begin{equation}\label{eq:S-vdG-8-13}
 \Pbb_0\!\left(\sup_{\theta\in\Theta}\sqrt n
 |\bar Z_\theta-\bar A_\theta|\ge b\right)
 \le D\exp\!\left[-\frac{b^2}{D^2(A_1+1)R^2}\right].
\end{equation}
\end{theorem}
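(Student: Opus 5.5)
The statement is the uniform deviation inequality of \citet[Thm.~8.13]{Vandegeer} specialised to our product densities. I would derive it from a generalised Bernstein chaining inequality for bracketed classes, namely Theorem~5.11 of \citet{Vandegeer}. The key observation is that the modified log-ratio is bounded below: since $p_{\theta,i}\ge 0$,
$$
Z_{i,\theta}=\tfrac12\log\frac{p_{\theta,i}+p_{0,i}}{2p_{0,i}}\ \ge\ -\tfrac12\log 2 .
$$
Because of this one-sided bound, Bernstein-type exponential moments are controlled by Hellinger distances alone, without any lower bound on the densities themselves.

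\emph{Step 1 (Bernstein norm versus Hellinger).} For a function $g$ and index $i$, I would use the Bernstein norm $\rho_{K,i}^2(g)=2K^2\,\mathbb E_0\{e^{|g|/K}-1-|g|/K\}$, with $K$ a fixed constant. Write $Z_{i,\theta}=\log\sqrt{\bar p_{\theta,i}/p_{0,i}}$ and use $|\log x|\le |x-1|/\min(x,1)$ together with $\sqrt{\bar p_{\theta,i}/p_{0,i}}\ge 2^{-1/2}$. This should give
$$
\frac1n\sum_{i=1}^n\rho_{K,i}^2(Z_{i,\theta})\ \lesssim\ \bar h_n^2\bigl(\bar p_\theta^{(n)},p_0^{(n)}\bigr)\ \le\ R^2 .
$$
The exponential moment is finite because $e^{|Z|}$ is bounded by $\sqrt2$ on $\{Z<0\}$ and by $\sqrt{\bar p_\theta/p_0}$ on $\{Z\ge 0\}$. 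The latter has $\mathbb E_0$-expectation at most $1$, which keeps everything integrable.

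\emph{Step 2 (brackets for $Z$).} Given Hellinger brackets $p_{ji}^L\le p_{\theta,i}\le p_{ji}^U$ from Definition~\ref{def:covering-number}, monotonicity of $\log$ yields brackets
$$
Z^L_{ji}=\tfrac12\log\frac{p^L_{ji}+p_{0,i}}{2p_{0,i}}\ \le\ Z_{i,\theta}\ \le\ Z^U_{ji}=\tfrac12\log\frac{p^U_{ji}+p_{0,i}}{2p_{0,i}} .
$$
The same Step~1 argument, applied to the ratio $(p^U+p_0)/(p^L+p_0)$ (whose denominator is at least $p_0$), bounds the averaged Bernstein size of $Z^U_j-Z^L_j$ by a constant times $\bar h_n\bigl((\mathbf p^L_j+\mathbf p_0^{(n)})/2,(\mathbf p^U_j+\mathbf p_0^{(n)})/2\bigr)\le\delta$. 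Hence the generalised bracketing entropy of $\{Z_\theta\}$ at scale $\delta$ is at most $H_B(\delta/\sqrt2,\Theta)$, up to a constant rescaling; this rescaling is where the $u/\sqrt2$ in the entropy integral enters.

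\emph{Step 3 (chaining).} I would then invoke the Bernstein chaining theorem with envelope radius $R$ and constant $K$. Its hypotheses are exactly the stated conditions: $b\le A_1\sqrt nR^2\wedge 8\sqrt nR$ plays the role of the truncation level condition, and the condition $b\ge\sqrt{D^2(A_1+1)}\bigl(\int_{b/(2^6\sqrt n)}^RH_B^{1/2}(u/\sqrt2)\,du\vee R\bigr)$ is the entropy integral condition. The conclusion is the sub-Gaussian tail $D\exp[-b^2/\{D^2(A_1+1)R^2\}]$ for $\sup_\theta\sqrt n|\bar Z_\theta-\bar A_\theta|$. Internally, that theorem uses the standard scheme: build a nested sequence of brackets at dyadic scales $2^{-s}R$, truncate increments at levels tied to $\sqrt n\,2^{-2s}R^2/H_B$, and apply Bernstein's inequality to the truncated links. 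It then controls the truncated remainders by the bracket widths and sums with weights $\eta_s$ satisfying $\sum\eta_s\le1$.

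\emph{Main obstacle.} I expect Step~1 to be the delicate part, in its bracket-difference form from Step~2. There one must bound the exponential moment of the log-ratio using only the one-sided bound, so that the Bernstein norm is controlled by the squared Hellinger distance with a universal constant. This is the only place the averaging with $p_0$ is essential. Once this is settled, Step~3 is a direct application of the chaining theorem with constants matched as in \citet{Vandegeer}.
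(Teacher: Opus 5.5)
The paper does not prove this statement. It quotes it from \citet{Vandegeer} and uses it only as a black box in Step~3 of the proof of Lemma~2. Your outline reconstructs the standard argument behind that citation, and it is correct in substance:
\begin{itemize}
\item averaging with $p_{0,i}$ gives $Z_{i,\theta}\ge-\tfrac12\log 2$, which makes the Bernstein norm with $K=1$ comparable to the Hellinger distance of the mixture;
\item monotonicity of $\log$ turns the mixture brackets of Definition~\ref{def:covering-number} into brackets for $Z$. Their Bernstein size is controlled because $p_{0,i}/\bar p^{L}_{ji}\le 2$, where $\bar p^{L}_{ji}=(p^{L}_{ji}+p_{0,i})/2$;
\item Bernstein chaining finishes the argument.
\end{itemize}
What this buys over the bare citation is visibility of exactly where the mixture trick and the $u/\sqrt2$ rescaling enter.

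One refinement to Step~1: finiteness of the exponential moment is not the operative fact. What you need is $e^{|z|}-1-|z|\le\sqrt2\,(e^{z}-1)^2$ for $z\ge-\tfrac12\log 2$. With $e^{Z_{i,\theta}}=(\bar p_{\theta,i}/p_{0,i})^{1/2}$, this gives $\rho_{1,i}^2(Z_{i,\theta})\lesssim h^2(\bar p_{\theta,i},p_{0,i})$ at once. The same inequality on $z\ge 0$ handles the bracket differences $Z^U-Z^L\ge 0$.

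Two further points should be made explicit.

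First, the $Y_i$ are not identically distributed and $Z_{i,\theta}$ depends on $i$. So the chaining step needs the independent, non-identically distributed version of the Bernstein chaining theorem (averaged Bernstein norms and averaged generalized bracketing entropy), not Theorem~5.11 itself. The proof is the same, using Bernstein's inequality for independent summands.

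Second, and more importantly, your Step~3 yields the tail bound only with $D$ a specific universal constant. That constant is fixed by the chaining theorem and by the constants in your Bernstein--Hellinger comparisons. So ``its hypotheses are exactly the stated conditions'' holds only for that $D$. This is also the only reading under which the statement is true, even though the phrase ``Let $R,D,A_1,b>0$'' suggests $D$ is free. To see this, take:
\begin{itemize}
\item a single-point $\Theta$ with $p_{\theta,i}\ne p_{0,i}$ for some $i$, so that $H_B\equiv 0$ and $R:=\overline h_n(\bar p_\theta^{(n)},p_0^{(n)})>0$;
\item $A_1=1$ and $b=\sqrt2\,DR$.
\end{itemize}
For all sufficiently small $D$ every hypothesis holds. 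Yet as $D\downarrow 0$ the right-hand side tends to $0$, while the left-hand side tends to $1$ because $\bar Z_\theta$ has a continuous law.
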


\section{Proof of Lemma~1: Gaussian Hellinger equivalence}
\label{sec:S-proof-hellinger}

\begin{restatedlemma}[Lemma~1 of the main text and an auxiliary bound]
\label{lem:S-gaussian}
    Suppose that $\mathcal M\subset\Rbb^d$ is compact and that
$\Omega\subset[\underline c,\overline c]$ with
$0<\underline c<\overline c<\infty$.
For all $\mu,\widetilde\mu\in\mathcal M$ and
$\sigma^2,\widetilde\sigma^2\in\Omega$, we have
    \begin{equation}
        h^2(\mathcal{N}_d( \mu, \sigma^2 I_d), \mathcal{N}_d( \tilde{\mu}, \tilde{\sigma}^2 I_d))\asymp \|\mu - \tilde{\mu}\|^2 + (\sigma^2 - \tilde{\sigma}^2)^2,\label{eq:hellinger_and_parameter}
    \end{equation}
    and
    \begin{equation}
        \sup_{y\in \Rbb^{d}} \left|\mathcal{N}_d(y\mid\mu, \sigma^2 I_d) - \mathcal{N}_d(y\mid \tilde{\mu}, \tilde{\sigma}^2 I_d)\right|\lesssim \|\mu - \tilde{\mu}\| + |\sigma^2 - \tilde{\sigma}^2|.\label{eq:infinity_norm_parameter}
    \end{equation}
\end{restatedlemma}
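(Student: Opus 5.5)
The plan is to work from the closed form of the Hellinger affinity between isotropic Gaussians. With $h^2(p,q)=\frac12\int(\sqrt p-\sqrt q)^2 = 1-\int\sqrt{pq}$, a direct Gaussian integral gives
\begin{equation*}
1-h^2\bigl(\mathcal N_d(\mu,\sigma^2I_d),\mathcal N_d(\widetilde\mu,\widetilde\sigma^2I_d)\bigr)
=\left(\frac{2\sigma\widetilde\sigma}{\sigma^2+\widetilde\sigma^2}\right)^{d/2}
\exp\!\left(-\frac{\|\mu-\widetilde\mu\|_2^2}{4(\sigma^2+\widetilde\sigma^2)}\right)
=:A(\sigma^2,\widetilde\sigma^2)\,B(\mu,\widetilde\mu,\sigma^2,\widetilde\sigma^2).
\end{equation*}
Set $x=\|\mu-\widetilde\mu\|_2^2$ and $s=(\sigma^2-\widetilde\sigma^2)^2$. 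The goal is to show $1-AB\asymp x+s$ uniformly over the compact sets.

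The first step is to control each factor. For $A$, note $\frac{2\sigma\widetilde\sigma}{\sigma^2+\widetilde\sigma^2}=1-\frac{(\sigma-\widetilde\sigma)^2}{\sigma^2+\widetilde\sigma^2}$. Since $\sigma,\widetilde\sigma\in[\underline c^{1/2},\overline c^{1/2}]$, we have $(\sigma-\widetilde\sigma)^2=(\sigma^2-\widetilde\sigma^2)^2/(\sigma+\widetilde\sigma)^2\asymp s$. The base of $A$ therefore lies in some fixed interval $[\beta,1]$ with $\beta>0$, and one minus the base is $\asymp s$. Because $u\mapsto 1-u^{d/2}$ is comparable to $1-u$ on $[\beta,1]$, this gives $1-A\asymp s$. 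For $B$, the exponent argument is $x/(4(\sigma^2+\widetilde\sigma^2))\in[0,M]$, where $M$ is bounded because $\mathcal M$ is compact and the variances are bounded below. Since $1-e^{-y}\asymp y$ on $[0,M]$ (with constants $e^{-M}$ and $1$), we get $1-B\asymp x$.

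The next step combines the two factors through $1-AB=(1-A)+A(1-B)$. This identity immediately gives the upper bound $1-AB\le(1-A)+(1-B)\lesssim s+x$. For the lower bound, $A\ge\beta^{d/2}>0$ gives $1-AB\ge(1-A)+\beta^{d/2}(1-B)\gtrsim s+x$. This proves \eqref{eq:hellinger_and_parameter}.

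The averaged statement \eqref{eq:average-hellinger-equivalence} of Lemma~\ref{lem:gaussian-hellinger} then follows by applying the pointwise bound with $\mu=f(t_i)$, $\widetilde\mu=g(t_i)$ and averaging over $i$. This is legitimate because all values of $f\in\mathcal F_k$ lie in a fixed compact set determined by $\mathcal A$, $\mathcal V$, $t\in[0,1]$ and $k$.

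For \eqref{eq:infinity_norm_parameter}, the map $(y,\mu,\sigma^2)\mapsto\mathcal N_d(y\mid\mu,\sigma^2I_d)$ is smooth. Its gradient in $(\mu,\sigma^2)$ is bounded uniformly in $y$, because $\|y-\mu\|\,e^{-\|y-\mu\|^2/(2\sigma^2)}$ and $\|y-\mu\|^2e^{-\|y-\mu\|^2/(2\sigma^2)}$ are bounded and the prefactor $\sigma^{-d}$ is bounded when $\sigma^2\ge\underline c$. Apply the mean value theorem along the segment joining $(\mu,\sigma^2)$ to $(\widetilde\mu,\widetilde\sigma^2)$. The segment stays inside $\mathrm{conv}(\mathcal M)\times[\underline c,\overline c]$, and convexity of the variance interval suffices here. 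This gives the sup-norm bound.

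The only delicate point I anticipate is making the lower-bound constants uniform, that is, keeping $A$ bounded away from zero and $x$ bounded. Both follow directly from compactness, so the argument should be essentially routine.
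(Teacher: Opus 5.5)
Your proposal is correct and follows essentially the same route as the paper. Both arguments use the closed-form isotropic Gaussian Hellinger affinity, the decomposition $h^2=(1-A)+A(1-B)$ with $A$ bounded away from zero, $1-A\asymp(\sigma^2-\widetilde\sigma^2)^2$ and $1-B\asymp\|\mu-\widetilde\mu\|_2^2$ by compactness, and a mean-value-theorem bound whose $(\mu,\sigma^2)$-gradients are bounded uniformly in $y$ for the sup-norm estimate.
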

\begin{proof}

\noindent\textbf{1. Proof of \eqref{eq:hellinger_and_parameter}:}

Recall the squared Hellinger distance between two isotropic Gaussian densities:
\begin{align*}
h^2\left(\mathcal{N}_d(\mu,\sigma^2I_d),
\mathcal{N}_d(\tilde{\mu},\tilde{\sigma}^2I_d)\right)=1-\left(\dfrac{2\sqrt{\sigma^2\tilde{\sigma}^2}}
{\sigma^2+\tilde{\sigma}^2}\right)^{d/2}
\exp\left\{-\dfrac{\|\mu-\tilde{\mu}\|^2}
{4(\sigma^2+\tilde{\sigma}^2)}\right\}.
\end{align*}
Write
\[
  A=
  \left(
    \frac{2\sqrt{\sigma^2\tilde{\sigma}^2}}
    {\sigma^2+\tilde{\sigma}^2}
  \right)^{d/2},
  \qquad
  B=
  \frac{\|\mu-\tilde{\mu}\|_2^2}
  {4(\sigma^2+\tilde{\sigma}^2)}.
\]
Then
\[
  h^2=1-Ae^{-B}=(1-A)+A(1-e^{-B}).
\]
Since $\Omega$ is compact and bounded away from zero,
$A$ is uniformly bounded away from zero. Moreover,
\[
  B\asymp\|\mu-\tilde{\mu}\|_2^2,
  \qquad
  1-e^{-B}\asymp B,
\]
where the second equivalence follows because the mean space is compact.
Thus,
\[
  A(1-e^{-B})
  \asymp
  \|\mu-\tilde{\mu}\|_2^2.
\]

Also,
\[
  A=
  \left\{
    1-
    \frac{(\sigma^2-\tilde{\sigma}^2)^2}
    {(\sigma^2+\tilde{\sigma}^2)^2}
  \right\}^{d/4}.
\]
Since $\Omega$ is compact and bounded away from zero,
\[
  1-A
  \asymp
  \frac{(\sigma^2-\tilde{\sigma}^2)^2}
  {(\sigma^2+\tilde{\sigma}^2)^2}
  \asymp
  (\sigma^2-\tilde{\sigma}^2)^2.
\]
Therefore,
\[
  h^2\bigl(
    \mathcal N_d(\mu,\sigma^2I_d),
    \mathcal N_d(\tilde\mu,\tilde\sigma^2I_d)
  \bigr)
  \asymp
  \|\mu-\tilde\mu\|_2^2
  +
  (\sigma^2-\tilde\sigma^2)^2.
\]

\noindent\textbf{2. Proof of \eqref{eq:infinity_norm_parameter}:}
We have that 
\begin{align*}
    \mathcal{N}_d(y\mid\mu, \sigma^2 I_d) = \dfrac{1}{(2\pi\sigma^2)^{d/2}}\exp\left(-\dfrac{\norm{y-\mu}^2}{2\sigma^2}\right)
\end{align*}

The derivative with respect to $\mu$ and $\sigma^2$ are then
\begin{align}
    \dfrac{\partial}{\partial \mu} \mathcal{N}_d(y\mid\mu,\sigma^2I_d) &= \mathcal{N}_d(y\mid\mu,\sigma^2I_d)\cdot \dfrac{y-\mu}{\sigma^2},\\
    \dfrac{\partial}{\partial \sigma^2} \mathcal{N}_d(y\mid\mu,\sigma^2I_d) &= \mathcal{N}_d(y\mid\mu,\sigma^2I_d)\cdot \left(-\dfrac{d}{2\sigma^2}+\dfrac{\|y-\mu\|^2}{2\sigma^4}\right).
\end{align}

By Cauchy-Schwarz inequality and triangle inequality, we have that:
\begin{align}
    \left|\dfrac{\partial}{\partial \mu} \mathcal{N}_d(y\mid\mu,\sigma^2I_d)\right| &\le \mathcal{N}_d(y\mid\mu,\sigma^2I_d)\cdot \dfrac{\|y-\mu\|}{\sigma^2},\\
    \left|\dfrac{\partial}{\partial \sigma^2} \mathcal{N}_d(y\mid\mu,\sigma^2I_d)\right| &\le \mathcal{N}_d(y\mid\mu,\sigma^2I_d)\cdot \left(\dfrac{d}{2\sigma^2}+\dfrac{\|y-\mu\|^2}{2\sigma^4}\right).
\end{align}
From the Mean Value Theorem, the difference $\left|\mathcal{N}_d(y\mid\mu, \sigma^2 I_d) - \mathcal{N}_d(y\mid \tilde{\mu}, \tilde{\sigma}^2 I_d)\right|$ can be expressed in terms of the gradients with respect to $\mu$ and $\sigma^2$:
\begin{align}
    \sup_{y \in \mathbb{R}^d}\left|\mathcal{N}_d(y\mid\mu, \sigma^2 I_d) - \mathcal{N}_d(y\mid \tilde{\mu}, \tilde{\sigma}^2 I_d)\right| \le L_{\mu} \|\mu - \tilde{\mu}\| + L_{\sigma^2}|\sigma^2 - \tilde{\sigma}^2|,
\end{align}
where 
{\fontsize{10}{10}\selectfont 
\begin{align*}
    L_{\mu} = \sup_{y \in \mathbb{R}^d}\dfrac{\|y-\mu\|}{\sigma^2}\mathcal{N}_d(y\mid\mu,\sigma^2I_d) = \sup_{y \in \mathbb{R}^d}\underbrace{\dfrac{1}{(2\pi)^{d/2}\sigma^{2(d/2+1)}} }_{\text{is bounded since } \sigma^2 \in [\underline{\sigma}^2,\overline{\sigma}^2]}\underbrace{\norm{y-\mu}\exp\left(-\dfrac{\norm{y-\mu}^2}{2\sigma^2}\right)}_{\text{is bounded}} \le C',
\end{align*}}
and
{\fontsize{10}{10}\selectfont 
 \begin{align*}
     &L_{\sigma^2} = \sup_{y\in \mathbb{R}^d}\left(\dfrac{d}{2\sigma^2}+\dfrac{\|y-\mu\|^2}{2\sigma^4}\right)\mathcal{N}_d(y\mid\mu,\sigma^2I_d)\\ &= \sup_{y\in \mathbb{R}^d}\left(\dfrac{d}{2\sigma^2}+\dfrac{\|y-\mu\|^2}{2\sigma^4}\right)  \dfrac{1}{(2\pi\sigma^2)^{d/2}}\exp\left(-\dfrac{\norm{y-\mu}^2}{2\sigma^2}\right)\\
     & = \sup_{y\in \mathbb{R}^d}\underbrace{\dfrac{d}{2(2\pi)^{d/2}\sigma^{2(d/2+1)}}}_{\text{is bounded since } \sigma^2 \in [\underline{\sigma}^2, \overline{\sigma}^2]}\underbrace{\exp\left(-\dfrac{\|y-\mu\|^2}{2\sigma^2}\right)}_{\text{is bounded }}+ \underbrace{\dfrac{1}{(2\pi)^{d/2}\sigma^{2(d/2+2)}}}_{\text{is bounded since } \sigma^2 \in [\underline{\sigma}^2, \overline{\sigma}^2]}\underbrace{\norm{y-\mu}^2\exp\left(-
    \dfrac{\|y-\mu\|^2}{2\sigma^2}\right)}_{\text{is bounded}} \le C''
 \end{align*}} are Lipschitz constants.

We then obtain 
$$\sup_{y\in \Rbb^{d}} \left|\mathcal{N}_d(y\mid\mu, \sigma^2 I_d) - \mathcal{N}_d(y\mid \tilde{\mu}, \tilde{\sigma}^2 I_d)\right|\lesssim \|\mu - \tilde{\mu}\| + |\sigma^2 - \tilde{\sigma}^2|.$$

\end{proof}
\section{Proof of Lemma~2: Fixed-model convergence rates}
\label{sec:S-proof-rates}

To prepare for the next result, we define the average empirical $L_2$ distance between two functions $f,g: [0,1] \to \mathbb{R}^d $ as follows
\begin{align}
    \|f-g\|_n = \left(\dfrac{1}{n} \sum_{i=1}^n \|f(i/n)-g(i/n)\|^2\right)^{1/2}.
\end{align}

\begin{restatedlemma}[Lemma~2 of the main text]
\label{lem:S-rates}
Under the conditions of Lemma~2 in the main text, for each fixed $k \geq k_0$, there exist universal constants $c_1, c_2 >0$ such that with at least probability $1- c_1n^{-c_2}$  we have 
\begin{equation}\label{eq:conv-function-each-MLE}
\|\widehat{f}_n^{(k)} - f^0\|^{2}_{n} \leq C\left(\dfrac{\rho_k\log n}{n}\right),\quad\quad |\widehat{\sigma}^2_{n, k} - \sigma_0^2| \leq C\left(\dfrac{\rho_k\log n}{n}\right)^{1/2},
\end{equation}
and
\begin{equation}
\label{eq:conv-llh-each-MLE}
0
\leq
L_n\left(
\widehat f_n^{(k)},
\widehat\sigma_{n,k}^2
\right)
-
L_n\left(
f^0,
\sigma_0^2
\right)
\leq
C\rho_k\frac{\log n}{n}.
\end{equation}
where $\rho_k=d(k+1)+k$ is the number of parameters of the model, and $C$ may depend on $d$, $k$, $\mathcal{V}$, $\mathcal{A}$, and $\Omega$, but not on $n$.
\end{restatedlemma}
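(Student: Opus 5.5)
The plan follows the route announced in the proof strategy: bound the bracketing entropy, apply Theorem~\ref{thm:S-vdG-8-14} to get a Hellinger rate, transfer it to parameters via Lemma~\ref{lem:S-gaussian}, and get the likelihood bound \eqref{eq:conv-llh-each-MLE} from Theorem~\ref{thm:S-vdG-8-13}.

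\textbf{Step 1: bracketing entropy.} Fix $k\ge k_0$. Split $\Theta$ according to the changepoint vector $\tbf$. There are at most $\binom{n-1}{k-1}\le n^{k-1}$ choices of $\tbf$ on the grid.

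For fixed $\tbf$, the map $(\Vbf,\av)\mapsto f_{\tbf,\Vbf,\av}(t)$ is linear with coefficients bounded uniformly in $t\in[0,1]$. Hence a Euclidean $\eta$-net of $\mathcal V^k\times\mathcal A\times\Omega$ has cardinality $\lesssim \eta^{-(d(k+1)+1)}$. It induces sup-norm control $\sup_t\|f-\tilde f\|\lesssim\eta$ and $|\sigma^2-\tilde\sigma^2|\le\eta$.

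To build brackets around a net point $(\tilde f,\tilde\sigma^2)$, I would use the standard Gaussian envelope construction. Take
\[
p^U_i(y)=(1+C\eta)\,\mathcal N_d\bigl(y\mid\tilde f(t_i),\tilde\sigma^2(1+\eta)I_d\bigr)\cdot e^{C\eta\|y-\tilde f(t_i)\|},
\]
and the analogous $p^L_i$. The sup bound \eqref{eq:infinity_norm_parameter} alone would not give pointwise brackets with integrable tails, which is why the envelope is needed. One checks $h^2(p^L_i,p^U_i)\lesssim\eta^2$, so $\bar h_n$ of the averaged brackets is $\lesssim\eta$. Taking $\eta\asymp\delta$ gives
\[
H_B(\delta)\le C_B\{(k-1)\log n+[d(k+1)+1]\log(1/\delta)\}.
\]

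\textbf{Step 2: Hellinger rate.} The entropy integral satisfies
\[
J_B(\delta)\lesssim \delta\bigl(\rho_k\log n+\rho_k\log(1/\delta)\bigr)^{1/2}=:\Psi(\delta),
\]
and $\Psi(\delta)/\delta^2$ is non-increasing. Choose $\delta_n=M(\rho_k\log n/n)^{1/2}$ with $M$ large. Then $\log(1/\delta_n)\lesssim\log n$, so $\sqrt n\delta_n^2\ge c\Psi(\delta_n)$.

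Theorem~\ref{thm:S-vdG-8-14} with $\delta=\delta_n$ gives $\bar h_n^2(p^{(n)}_{\widehat\theta},p^{(n)}_0)\le\delta_n^2$ with probability at least $1-c\exp(-M^2\rho_k\log n/c^2)=1-cn^{-c_2}$. Lemma~\ref{lem:S-gaussian} applies because the means lie in a compact set. Averaging its equivalence over $i$ gives
\[
\|\widehat f^{(k)}_n-f^0\|_n^2+|\widehat\sigma^2_{n,k}-\sigma_0^2|^2\lesssim\delta_n^2,
\]
which is \eqref{eq:conv-function-each-MLE}.

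\textbf{Step 3: likelihood bound.} The lower bound $0\le$ in \eqref{eq:conv-llh-each-MLE} holds because $k\ge k_0$ gives $(f^0,\sigma_0^2)\in\mathcal F_k\times\Omega$, so the MLE dominates it.

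For the upper bound, I would work directly with the Gaussian log-likelihood rather than the $Z_{i,\theta}$ of Theorem~\ref{thm:S-vdG-8-13}. Write $\varepsilon_i=Y_i-f^0(t_i)$ and $\Delta_i=\widehat f(t_i)-f^0(t_i)$. Then
\[
\overline{\mathcal L}_n(\widehat f,\widehat\sigma^2)-\overline{\mathcal L}_n(f^0,\sigma_0^2)
\]
splits into three pieces:
\begin{itemize}
\item a variance term $-\tfrac d2\log(\widehat\sigma^2/\sigma_0^2)+\tfrac{1}{2n}\sum\|\varepsilon_i\|^2(\sigma_0^{-2}-\widehat\sigma^{-2})$;
\item the cross term $\widehat\sigma^{-2}n^{-1}\sum\langle\varepsilon_i,\Delta_i\rangle$;
\item the term $-\|\Delta\|_n^2/(2\widehat\sigma^2)$, which is nonpositive and can be dropped.
\end{itemize}

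The variance term, maximized over $\widehat\sigma^2$, is at most a constant times $(n^{-1}\sum\|\varepsilon_i\|^2/d-\sigma_0^2)^2$. By chi-square concentration this is $O(\log n/n)$ with polynomial probability.

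For the cross term, use the sup over the localized class $\{\|f-f^0\|_n\le\delta_n\}$. For each fixed $\tbf$ the class lies in a linear space of dimension $\le d(k+1)$, so the sup equals $\delta_n$ times the norm of a projection of $\varepsilon$. That norm is $\sqrt{\chi^2_{d(k+1)}/n}$, uniformly over the $n^{k-1}$ choices of $\tbf$, and by a union bound it is $\lesssim(\rho_k\log n/n)^{1/2}$. The cross term is therefore $\lesssim\delta_n^2$.

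This is the main obstacle: making the deviation uniform over the random, data-chosen $\tbf$. The union bound over grid configurations is exactly where $\log n$ enters. If a cleaner citation is wanted, the same conclusion should follow from Theorem~\ref{thm:S-vdG-8-13} with $R\asymp\delta_n$, $b\asymp\sqrt n\delta_n^2$ and the entropy of Step~1, together with $\bar A_\theta\le0$. However, relating $\bar Z_\theta$ back to the log-likelihood ratio requires a concavity argument that only gives one direction, so I would primarily use the direct Gaussian decomposition.
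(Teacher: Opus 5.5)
Your proposal is correct. Steps~1--2 follow the paper's route: the $n^{k-1}$ grid-configuration factor in the bracketing entropy, Theorem~\ref{thm:S-vdG-8-14} at $\delta_n\asymp(\rho_k\log n/n)^{1/2}$, and then Lemma~\ref{lem:gaussian-hellinger}.

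The one difference in Step~1 is the bracket construction. The paper builds additive brackets $\max\{p_{ji}-\delta,0\}$ and $\min\{p_{ji}+\delta,H\}$ from the sup-norm bound \eqref{eq:infinity_norm_parameter} and a Gaussian envelope $H$. This needs a tail truncation at radius $\asymp\{\log(1/\delta)\}^{1/2}$ and gives Hellinger width only $\delta^{1/2}\{\log(1/\delta)\}^{d/4}$. Your multiplicative Gaussian brackets have width $\asymp\eta$ directly. Both yield the same $\log(1/\epsilon)$ entropy.

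The genuine divergence is Step~3. The paper applies Theorem~\ref{thm:S-vdG-8-13} on the Hellinger ball of radius $R\asymp(\rho_k\log n/n)^{1/2}$ and uses $\bar A_{\widehat\theta}\le 0$ (nonnegativity of KL). It then applies the concavity inequality $\tfrac12\log(\widehat p_i/p_i^0)\le 2Z_{i,\widehat\theta}$ to get $\overline{\mathcal L}_n(\widehat f_n^{(k)},\widehat\sigma_{n,k}^2)-\overline{\mathcal L}_n(f^0,\sigma_0^2)\le 4\bar Z_{\widehat\theta}$. So your concern that concavity ``only gives one direction'' is not an obstacle. That direction is exactly the upper bound needed, and the lower bound comes from the MLE property in both arguments.

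Your direct decomposition instead exploits the Gaussian linear structure. You handle the variance term by chi-square concentration, and the cross term by projecting onto a finite-dimensional space for each fixed $\tbf$ plus a union bound over at most $n^{k-1}$ configurations. This is more elementary and shows explicitly that $\log n$ enters only through that union bound. The paper's route is model-agnostic and reuses the entropy bound it has already computed.

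Two minor points in your cross-term step. With $\tbf$ fixed, $f-f^0$ lies in the span of the CPL functions with changepoints $\tbf$ together with $f^0$. That span has dimension at most $d(k+1)+1$, not $d(k+1)$, because the changepoints of $f^0$ generally differ from $\tbf$. Also, the supremum over the localized slice is at most, not equal to, $\delta_n$ times the projected-noise norm. Neither affects the rate.

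You also do not need the localization from Step~2 here. Let $P$ be the projection onto that span in the empirical inner product. Then the cross term plus the dropped term $-\|\Delta\|_n^2/(2\widehat\sigma^2)$ is at most $\|P\varepsilon\|_n^2/(2\widehat\sigma^2)$, since $ab-b^2/2\le a^2/2$.
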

\begin{proof}
The proof is divided into a few small steps.
\paragraph{Step 1. Bound the covering number of the space of changepoint models} To apply Theorem~\ref{thm:S-vdG-8-14} to provide the convergence of parameters and likelihood, we need to bound the covering number with bracketing for the space of changepoint models with $k$ segments. We generalize the technique in~\cite{van_de_vaart} for our model.

\paragraph{Step 1.1. Covering the space of changepoint models with fixed
changepoints under the $\ell_{\infty}$ norm.}
Suppose that the changepoint vector
$\tbf=(\tau_1,\dots,\tau_{k-1})\subset(0,1)$ is fixed.

Given $\varepsilon>0$, compactness of
$\mathcal A\subset\Rbb^d$ implies that there exists an
$\varepsilon$-net
$\{\av_i\}_{i=1}^{N_1}\subset\mathcal A$ under the
$\ell_\infty$ norm, with
$
N_1\lesssim
\left(\frac{1}{\varepsilon}\right)^d.
$
Thus, for every $\av\in\mathcal A$, there exists
$\av_i$ satisfying
$
\|\av-\av_i\|_\infty<\varepsilon.
$
Similarly, there exist $\varepsilon$-nets
$\{V_i\}_{i=1}^{N_2}\subset\mathcal V$ and
$\{\sigma_i^2\}_{i=1}^{N_3}\subset\Omega$ satisfying
$
N_2\lesssim
\left(\frac{1}{\varepsilon}\right)^d,
N_3\lesssim
\frac{1}{\varepsilon}.
$

Consider the product net
\begin{align*}
B
:=
\bigl\{
&(\av_{i_0},V_{i_1},\dots,V_{i_k},\sigma_{i_{k+1}}^2):
i_0\in[N_1],i_1,\dots,i_k\in[N_2],\;
i_{k+1}\in[N_3]
\bigr\}
\subset
\mathcal A\times\mathcal V^k\times\Omega.
\end{align*}

Its cardinality satisfies
$
|B|
\lesssim
\left(\frac{1}{\varepsilon}\right)^{d(k+1)+1}.
$
Moreover, for every
$(\av,\Vbf,\sigma^2)\in
\mathcal A\times\mathcal V^k\times\Omega$,
there exists
$(\widetilde{\av},\widetilde{\Vbf},\widetilde{\sigma}^2)\in B$
such that
$
\|\av-\widetilde{\av}\|_\infty<\varepsilon,
\|V_j-\widetilde V_j\|_\infty<\varepsilon,
j=1,\dots,k,
$
and
$
|\sigma^2-\widetilde{\sigma}^2|<\varepsilon.
$

Using the hinge representation of a continuous piecewise-linear signal,
for every $t\in[0,1]$,
$$
f_{\tbf,\Vbf,\av}(t)
=
\av+V_1t
+
\sum_{j=1}^{k-1}
(V_{j+1}-V_j)(t-\tau_j)_+.
$$
Because the changepoint vector $\tbf$ is fixed for both functions,
$$
\begin{aligned}
&\left\|
f_{\tbf,\Vbf,\av}(t)
-
f_{\tbf,\widetilde{\Vbf},\widetilde{\av}}(t)
\right\|_\infty\leq
\|\av-\widetilde{\av}\|_\infty
+
\|V_1-\widetilde V_1\|_\infty t+
\sum_{j=1}^{k-1}
\left\|
(V_{j+1}-V_j)
-
(\widetilde V_{j+1}-\widetilde V_j)
\right\|_\infty
(t-\tau_j)_+.
\end{aligned}
$$
Since $t\leq1$, $(t-\tau_j)_+\leq1$, and
$
\left\|
(V_{j+1}-V_j)
-
(\widetilde V_{j+1}-\widetilde V_j)
\right\|_\infty
\leq
2\varepsilon,
$
we obtain
$$
\begin{aligned}
\left\|
f_{\tbf,\Vbf,\av}(t)
-
f_{\tbf,\widetilde{\Vbf},\widetilde{\av}}(t)
\right\|_\infty
&\leq
\varepsilon+\varepsilon+2(k-1)\varepsilon=
2k\varepsilon.
\end{aligned}
$$
Therefore,
$
\sup_{t\in[0,1]}
\left\|
f_{\tbf,\Vbf,\av}(t)
-
f_{\tbf,\widetilde{\Vbf},\widetilde{\av}}(t)
\right\|_\infty
\leq
2k\varepsilon.
$

Combining this bound with
Lemma~\ref{lem:S-gaussian}, we obtain
\begin{equation}
\label{eq:S-density-net-bound}
\begin{aligned}
\sup_{y\in\Rbb^d}
\Bigl|
&\mathcal N_d
\bigl(
y\mid
f_{\tbf,\Vbf,\av}(t_i),
\sigma^2I_d
\bigr)-
\mathcal N_d
\bigl(
y\mid
f_{\tbf,\widetilde{\Vbf},\widetilde{\av}}(t_i),
\widetilde{\sigma}^2I_d
\bigr)
\Bigr|
\lesssim
k\varepsilon,
\qquad i\in[n].
\end{aligned}
\end{equation}
\paragraph{Step 1.2. Covering the space of changepoint models with fixed changes under Hellinger distance (with bracketing).} For every $\delta > 0$, from the previous step, we have a collection of product normal densities $\{\mathbf{p}_j\}_{j=1}^{N}$ with $\mathbf{p}_j = (p_{j1}, \dots, p_{jn})$ on $\mathcal{Y}^{n}$ and $N\asymp (k / \delta)^{d(k+1)+1}$ such that for every tuple $(\av, \Vbf, \sigma^2)\in \mathcal{A} \times \mathcal{V}^{k}\times \Omega$, there exists a $\mathbf{p}_j$ satisfying
\begin{equation}\label{eq:ell-inf-covering}
    \sup_{y\in\Rbb^d}
    \left|
    \mathcal{N}_d\!\left(
        y\mid f_{\tbf,\Vbf,\av}(t_i),\sigma^2I_d
    \right)
    -
    p_{ji}(y)
    \right|
    \leq \delta,
    \qquad \forall i\in[n].
\end{equation}
Because $\mathcal A$ and $\mathcal V$ are compact and $k$ is fixed, there
exists a compact set $\mathcal M\subset\Rbb^d$ such that
\(
    f_{\tbf,\Vbf,\av}(t_i)\in\mathcal M
\)
for every admissible $(\tbf,\Vbf,\av)$, every $i\in[n]$, and every $n$.
Moreover,
\(
    \Omega=
    [\underline{\sigma}^2,\overline{\sigma}^2]
    \subset(0,\infty).
\)
Therefore, the entire Gaussian family
\(
    \left\{
    \mathcal N_d(\,\cdot\mid\mu,\sigma^2I_d):
    \mu\in\mathcal M,\ \sigma^2\in\Omega
    \right\}
\)
admits a common integrable envelope. In particular, there exist constants
$b_1,b_2,B>0$, depending only on $\mathcal M$ and $\Omega$, such that
\begin{equation}
    H(y)=
    \begin{cases}
        b_1\exp\!\left(-b_2\norm{y}^2\right),
        & \norm{y}\ge B,\\[1mm]
        (2\pi\underline{\sigma}^2)^{-d/2},
        & \text{otherwise},
    \end{cases}
\end{equation}
satisfies
\[
    \mathcal N_d(y\mid\mu,\sigma^2I_d)\le H(y)
\]
for every $y\in\Rbb^d$, $\mu\in\mathcal M$, and
$\sigma^2\in\Omega$. Thus, $H$ dominates both the net densities
$p_{ji}$ and every candidate density in the model class.

We construct brackets
$[\mathbf p_j^L,\mathbf p_j^U]$, where
$\mathbf p_j^U=(p_{j1}^U,\ldots,p_{jn}^U)$ and
$\mathbf p_j^L=(p_{j1}^L,\ldots,p_{jn}^L)$, by setting
\begin{align*}
    p_{ji}^L(y)
    &=
    \max\{p_{ji}(y)-\delta,0\},\\
    p_{ji}^U(y)
    &=
    \min\{p_{ji}(y)+\delta,H(y)\}.
\end{align*}

For a candidate density
\[
    p_{\tbf,\Vbf,\av,\sigma^2,i}(y)
    :=
    \mathcal N_d\!\left(
    y\mid f_{\tbf,\Vbf,\av}(t_i),\sigma^2I_d
    \right),
\]
choose a net density $p_{ji}$ satisfying
\[
    \sup_{y\in\Rbb^d}
    \left|
    p_{\tbf,\Vbf,\av,\sigma^2,i}(y)-p_{ji}(y)
    \right|
    \le\delta,
    \qquad i\in[n],
\]
as guaranteed by \eqref{eq:ell-inf-covering}. Then
\[
    p_{\tbf,\Vbf,\av,\sigma^2,i}(y)
    \ge p_{ji}(y)-\delta
    \ge p_{ji}^L(y).
\]
Similarly,
\[
    p_{\tbf,\Vbf,\av,\sigma^2,i}(y)
    \le p_{ji}(y)+\delta,
\]
and, because $H$ is an envelope for the entire candidate family,
\[
    p_{\tbf,\Vbf,\av,\sigma^2,i}(y)\le H(y).
\]
Consequently,
\[
    p_{\tbf,\Vbf,\av,\sigma^2,i}(y)
    \le
    \min\{p_{ji}(y)+\delta,H(y)\}
    =
    p_{ji}^U(y).
\]
Hence,
\begin{equation}
    p_{ji}^L(y)
    \le
    \mathcal N_d\!\left(
    y\mid f_{\tbf,\Vbf,\av}(t_i),\sigma^2I_d
    \right)
    \le
    p_{ji}^U(y),
    \qquad
    \forall y\in\Rbb^d,\ i\in[n].
\end{equation}
Therefore, this collection of brackets satisfies condition~(ii) in
Definition~\ref{def:covering-number}.

We next verify condition~(i). For any $j,i$ and
$\overline B\ge B$,
\begin{align}
    \int_{\Rbb^d}
    \left(p_{ji}^U-p_{ji}^L\right)\,dy
    &\le
    \int_{\norm{y}\le\overline B}2\delta\,dy
    +
    \int_{\norm{y}\ge\overline B}H(y)\,dy
    \nonumber\\
    &\lesssim
    \delta\overline B^d
    +
    \overline B^d
    \exp\!\left(-b_2\overline B^2\right).
    \label{bracket-radi}
\end{align}

where we use spherical coordinates to have
\begin{equation*}
	 \int_{\norm{y} \leq \overline{B}}  dy = \dfrac{\pi^{d/2}}{\Gamma(d/2 + 1)} \overline{B}^d \lesssim \overline{B}^d,
\end{equation*}
and, using the change of variables $u=b_2r^2$,
\begin{align*}
\int_{\norm{y}\ge\overline B}
    \exp\!\left(-b_2\norm{y}^2\right)\,dy
&\lesssim
\int_{\overline B}^{\infty}
    r^{d-1}\exp(-b_2r^2)\,dr\\
&=
\frac{1}{2b_2^{d/2}}
\int_{b_2\overline B^2}^{\infty}
    u^{d/2-1}e^{-u}\,du\\
&\lesssim
\overline B^{d-2}
\exp\!\left(-b_2\overline B^2\right),
\end{align*}
Choose
$
    \overline B
    =
    C_0\left\{\log(1/\delta)\right\}^{1/2},
$
where $C_0>0$ is sufficiently large that $b_2C_0^2>1$.
Then, for all sufficiently small $\delta$, $\overline B\ge B$ and
\[
    \overline B^d
    \exp\!\left(-b_2\overline B^2\right)
    \lesssim
    \delta\left\{\log(1/\delta)\right\}^{d/2}.
\]
Consequently, \eqref{bracket-radi} gives
\begin{equation}\label{bracket-bound}
    \int_{\Rbb^d}
    \left(p_{ji}^U-p_{ji}^L\right)\,dy
    \lesssim
    \delta
    \left\{\log(1/\delta)\right\}^{d/2}.
\end{equation}
Moreover, let
\(
    p_i^0(y)
    :=
    \mathcal N_d\!\left(
        y\mid f^0(t_i),\sigma_0^2I_d
    \right)
\)
denote the density of $Y_i$ under the true model. Because $p^{U}_{ji} \geq p^{L}_{ji}$, we have
\begin{align*}
    h^2\left(\dfrac{p_{ji}^{U} + p_i^0}{2}, \dfrac{p_{ji}^{L} + p_i^0}{2}\right) &=\int_{\mathbb{R}^{d}} \left(\sqrt{\dfrac{p_{ji}^{U} + p_i^0}{2}} -\sqrt{\dfrac{p_{ji}^{L} + p_i^0}{2}}\right)^2 dy  \\
    & \leq \int_{\mathbb{R}^{d}} \left({\dfrac{p_{ji}^{U} + p_i^0}{2}} -{\dfrac{p_{ji}^{L} + p_i^0}{2}}\right) dy\\
    & = \dfrac{1}{2} \int_{\mathbb{R}^{d}} (p_{ji}^{U} - p_{ji}^{L}) dy\\
    & \lesssim \delta \left(\log\left(\dfrac{1}{\delta}\right)\right)^{d/2}.
\end{align*}
Therefore,
$$\overline{h}_n\left(\dfrac{\mathbf{p}_{j}^{U} + p^{(n)}_0}{2}, \dfrac{\mathbf{p}_{j}^{L} + p^{(n)}_0}{2}\right) \lesssim \delta^{1/2}(\log(1/\delta))^{d/4}.$$
Hence, there exists a constant $c>0$, independent of $\delta$, such that
\begin{align*}
H_B\!\left(
    c\delta^{1/2}
    \{\log(1/\delta)\}^{d/4}
\right)
&\leq \log N \lesssim
\{d(k+1)+1\}\log(1/\delta),
\end{align*}
Let $ \epsilon
    =
    c\delta^{1/2}
    \{\log(1/\delta)\}^{d/4}.$ Since
$
    \log(1/\epsilon)\asymp\log(1/\delta)
$ as $\delta\downarrow0$, it follows that
\begin{equation*}
    H_B(\epsilon)
    \lesssim
    \{d(k+1)+1\}\log(1/\epsilon)
\end{equation*}
for all sufficiently small $\epsilon$.
 
\paragraph{Step 1.3. Aggregate changepoints} Because there are $\binom{n}{k-1}$ ways to choose $k-1$ changepoints among $n$ data points, the covering number with bracketing of the whole model can be bounded as follows:
\begin{equation*}
    N_B(\epsilon)
     \lesssim \binom{n}{k-1} (1/\epsilon)^{d(k+1)+1} \leq n^{k-1}\left(\dfrac{1}{\epsilon}\right)^{d(k+1)+1}.
\end{equation*}
Hence, the entropy number with bracketing of the whole model can be bounded as
\begin{equation*}
    H_B(\epsilon)
     \lesssim (k-1) \log n + (d(k+1)+1)\log\left(\dfrac{1}{\epsilon}\right).
\end{equation*}

In particular, there exists $C_B>0$, depending only on $\mathcal{V}, \mathcal{A}$ and $\Omega$ such that for $n$ sufficiently large, $H_B(\epsilon) \le C_B\left[(k-1) \log n + (d(k+1)+1)\log\left(\dfrac{1}{\epsilon}\right)\right]$.

\paragraph{Step 2. Convergence rate of parameter estimation} This is a consequence of Theorem~\ref{thm:S-vdG-8-14}.

Since $\log(1/u)$ is a non-increasing function of $u$, for all $u \in [\varepsilon^2/c_0,\varepsilon]$ we have $\log (1/u) \le \log(1/(\varepsilon^2/c_0))$, and therefore
\begin{align*}
    J_B(\delta) & \leq \int_{\varepsilon^2/c_0}^{\varepsilon} \left\{C_B\left[(k-1)\log n+ (kd+d+1)\log\left(\dfrac{1}{u}\right)\right]\right\}^{1/2} du \vee \varepsilon\\
    &\leq C_B^{1/2} \varepsilon \left[(k-1)\log n+(kd+d+1)\log\dfrac{1}{ (\varepsilon^2/c_0)}\right]^{1/2}\\
    &\leq C_B'^{1/2} \varepsilon \left[(k-1)\log n+(kd+d+1)\log\dfrac{1}{ \varepsilon^2}\right]^{1/2},
\end{align*}
for all $\varepsilon$ small enough and some constant $C_B' \ge C_B$ depending only on $\mathcal{V}, \mathcal{A}$ and $\Omega$. 

Let $\Psi(\varepsilon) = C_B'^{1/2} \varepsilon \left[(k-1)\log n+(kd+d+1)\log\dfrac{1}{ \varepsilon^2}\right]^{1/2}$. For $0<\epsilon\leq1$, both $\epsilon^{-1}$ and
$
\left[
(k-1)\log n
+
(kd+d+1)\log\left(\frac1{\epsilon^2}\right)
\right]^{1/2}
$
are non-increasing functions of $\epsilon$. Therefore,
$$
\frac{\Psi(\epsilon)}{\epsilon^2}
=
\frac{C_B'^{1/2}}{\epsilon}
\left[
(k-1)\log n
+
(kd+d+1)\log\left(\frac1{\epsilon^2}\right)
\right]^{1/2}
$$
is non-increasing on $(0,1]$, as required by
Theorem~\ref{thm:S-vdG-8-14}.

Let $\varepsilon_n = \max\{ 1, 2c C_B'^{1/2}\} \left(\underbrace{(kd+d+k)}_{\rho_k}\dfrac{\log n}{n}\right)^{1/2}$ ($c>0$ is a given universal constant mentioned in Theorem~\ref{thm:S-vdG-8-14}), we have
$$c\Psi(\varepsilon_n) = cC_B'^{1/2} \varepsilon_n \left[(k-1)\log n+(kd+d+1)\log\dfrac{1}{ \varepsilon_n^2}\right]^{1/2} \leq \varepsilon_n \times (2cC_B'^{1/2}(\rho_k\log(n))^{1/2})\leq  \varepsilon_n^2 \sqrt{n}.$$
Substituting $\epsilon=\epsilon_n$ into the conclusion of
Theorem~\ref{thm:S-vdG-8-14}), we obtain
{\fontsize{10}{10}\selectfont
\begin{align*}
\mathbb{P}_0\left(\overline{h}_n\left(p^{(n)}_{\widehat{\tbf}, \widehat{\Vbf}, \widehat{\av}, \widehat{\sigma}^2}, p^{(n)}_0\right)\geq \max\{1,2cC_B^{1/2}\}\left(\dfrac{\rho_k\log n}{n}\right)^{1/2}\right) &\leq c \exp\left(-\left(\max\{ 1, 2c C_B^{1/2}\}\right)^2\rho_k\log(n)/c^2\right)\\
&\leq c_1 n^{-c_2}, (\text{since } \rho_k
\ge 1 \text{ and} (\max\{1,2cC_B^{1/2}\})^2 \ge 1  )
\end{align*}}
where $C_B$ depends on $\mathcal{V}, \mathcal{A}$, and $\Omega$ only, and $c_1=c$ and $c_2=\dfrac{1}{c^2}$ are universal constants.

By the Gaussian Hellinger equivalence in
Lemma~1, we have that $$\|\widehat{f}_n^{(k)} - f^0\|_n ^2 +|\widehat{\sigma}^2_{n, k} - \sigma_0^2|^2\asymp \overline{h}_n^2\left(p^{(n)}_{\widehat{\tbf}, \widehat{\Vbf}, \widehat{\av}, \widehat{\sigma}^2}, p^{(n)}_0\right),$$ therefore,
\begin{equation}
\|\widehat{f}_n^{(k)} - f^0\|^{2}_{n} \leq C\left(\dfrac{\rho_k\log n}{n}\right),\quad\quad |\widehat{\sigma}^2_{n, k} - \sigma_0^2| \leq C\left(\dfrac{\rho_k\log n}{n}\right)^{1/2}, \quad \text{ for } k\ge k_0
\end{equation}
where $C$ depends on $\mathcal{V}, \mathcal{A}$, and $\Omega$ only, and $c_1$ and $c_2$ are universal constants.

\paragraph{Step 3. Convergence rate of likelihood functions}

For a sequence of measurable functions
$g=(g_1,\dots,g_n)$, write
\[
  P_ng=\frac{1}{n}\sum_{i=1}^n g_i(Y_i),
  \qquad
  P_0g=\frac{1}{n}\sum_{i=1}^n
  \mathbb{E}_0\{g_i(Y_i)\},
\]
where $\mathbb{E}_0$ denotes expectation under
$Y_i\sim\mathcal{N}_d(f^0(t_i),\sigma_0^2I_d)$.

For fixed $k\ge k_0$, let
\[
  \widehat p_i(y)
  :=
  \mathcal{N}_d\!\left(
    y\mid\widehat f_n^{(k)}(t_i),
    \widehat\sigma_{n,k}^2I_d
  \right),
  \qquad
  p_i^0(y)
  :=
  \mathcal{N}_d\!\left(
    y\mid f^0(t_i),\sigma_0^2I_d
  \right),
\]
and define
\[
  \bar p_i(y)
  :=
  \frac{\widehat p_i(y)+p_i^0(y)}{2},
  \qquad
  \bar p_{\widehat\theta}^{(n)}
  :=
  \bigotimes_{i=1}^n\bar p_i.
\]
We work with the ratio $\bar p_i/p_i^0$, which is bounded below by
$1/2$, rather than $\widehat p_i/p_i^0$, which is not necessarily bounded away from zero.

By concavity of the logarithm,
\begin{align}
  \frac{1}{2}
  \log\frac{\widehat p_i(Y_i)}{p_i^0(Y_i)}
  \le
  \log\frac{\widehat p_i(Y_i)+p_i^0(Y_i)}
  {2p_i^0(Y_i)}
  =
  \log\frac{\bar p_i(Y_i)}{p_i^0(Y_i)}.
  \label{eq:Jensen_ineq}
\end{align}

Recall that we have proved
\[
  \mathbb{P}_0\left\{
    \overline h_n\left(
      p_{\widehat\theta}^{(n)},p_0^{(n)}
    \right)
    \le
    C\left(
      \frac{\rho_k\log n}{n}
    \right)^{1/2}
  \right\}
  \ge
  1-c_1n^{-c_2}.
\]
For any two densities $p$ and $p_0$,
\[
  h\left(\frac{p+p_0}{2},p_0\right)
  \le
  h(p,p_0)
  \le
  4h\left(\frac{p+p_0}{2},p_0\right).
\]
Therefore,
\[
  \mathbb{P}_0\left\{
    \overline h_n\left(
      \bar p_{\widehat\theta}^{(n)},p_0^{(n)}
    \right)
    \le
    C\left(
      \frac{\rho_k\log n}{n}
    \right)^{1/2}
  \right\}
  \ge
  1-c_1n^{-c_2}.
\]

Substitute
\[
  R
  =
  C\left(
    \frac{\rho_k\log n}{n}
  \right)^{1/2},
  \qquad
  b
  =
  C\frac{\rho_k\log n}{\sqrt n}
\]
into Theorem~\ref{thm:S-vdG-8-13}.
For sufficiently large $n$, we have
\(
  b\le \sqrt n\,R^2\wedge \sqrt n\,R,
\)
and
\begin{align*}
  \int_{b/(2^6\sqrt n)}^R
  H_B^{1/2}\left(
    \frac{u}{\sqrt2},\Theta
  \right)\,du
  &\lesssim
  R\left[
    (k-1)\log n
    +
    \{d(k+1)+1\}
    \log\left(
      \frac{2^6\sqrt n}{b}
    \right)
  \right]^{1/2}\\
  &\lesssim
  R(\rho_k\log n)^{1/2}
  \asymp b \text{ (see Step 2).}
\end{align*}
Thus, the conditions of
Theorem~\ref{thm:S-vdG-8-13} are satisfied. Applying that theorem to the
Hellinger ball
\[
  \Theta_R
  :=
  \left\{
    \theta\in\Theta:
    \overline h_n\!\left(
      \bar p_\theta^{(n)},p_0^{(n)}
    \right)
    \le R
  \right\},
\]
where
\[
  \bar p_{\theta,i}
  :=
  \frac{p_{\theta,i}+p_i^0}{2},
  \qquad
  \bar p_\theta^{(n)}
  :=
  \bigotimes_{i=1}^n\bar p_{\theta,i},
\]
gives
{\fontsize{9}{9}\selectfont
\begin{align}
  &\mathbb P_0\left\{
    \sup_{\theta\in\Theta_R}
    \sqrt n\,
    \left|
      \bar Z_\theta-\bar A_\theta
    \right|
    \ge
    C\frac{\rho_k\log n}{\sqrt n}
  \right\}
  \le
  c_1n^{-c_2},
  \label{eq:S-uniform-Z-bound}
\end{align}}
where
\[
  Z_{i,\theta}
  :=
  \frac12
  \log\frac{
    p_{\theta,i}(Y_i)+p_i^0(Y_i)
  }{
    2p_i^0(Y_i)
  },
  \qquad
  \bar Z_\theta
  :=
  \frac1n\sum_{i=1}^n Z_{i,\theta},
\]
and
\[
  \bar A_\theta
  :=
  \frac1n\sum_{i=1}^n
  \mathbb E_0 Z_{i,\theta}.
\]

Recall also that, with probability at least $1-c_1n^{-c_2}$,
\[
  \overline h_n\!\left(
    \bar p_{\widehat\theta}^{(n)},p_0^{(n)}
  \right)
  \le
  C\left(
    \frac{\rho_k\log n}{n}
  \right)^{1/2}
  =
  R.
\]
Combining this event with \eqref{eq:S-uniform-Z-bound} gives
\begin{align}
  \mathbb P_0\left\{
    \left|
      \bar Z_{\widehat\theta}
      -
      \bar A_{\widehat\theta}
    \right|
    \ge
    C\frac{\rho_k\log n}{n}
  \right\}
  \le
  2c_1n^{-c_2}.
  \label{eq:S-fitted-Z-bound}
\end{align}

Furthermore, by non-negativity of the Kullback--Leibler divergence,
\begin{align}
  \bar A_{\widehat\theta}
  =
  \frac{1}{2n}
  \sum_{i=1}^n
  \mathbb E_0
  \log\frac{
    \bar p_{\widehat\theta,i}(Y_i)
  }{
    p_i^0(Y_i)
  }
  =
  -\frac{1}{2n}
  \sum_{i=1}^n
  \mathrm{KL}\!\left(
    p_i^0
    \,\middle\|\,
    \bar p_{\widehat\theta,i}
  \right)
  \le 0.
  \label{eq:S-midpoint-KL}
\end{align}
It follows from
\eqref{eq:S-fitted-Z-bound} and
\eqref{eq:S-midpoint-KL} that
\begin{align}
  \mathbb P_0\left\{
    \bar Z_{\widehat\theta}
    \le
    C\frac{\rho_k\log n}{n}
  \right\}
  \ge
  1-2c_1n^{-c_2}.
  \label{eq:S-fitted-Z-upper}
\end{align}

By \eqref{eq:Jensen_ineq},
\[
  \frac12
  \log\frac{
    p_{\widehat\theta,i}(Y_i)
  }{
    p_i^0(Y_i)
  }
  \le
  \log\frac{
    \bar p_{\widehat\theta,i}(Y_i)
  }{
    p_i^0(Y_i)
  }
  =
  2Z_{i,\widehat\theta}.
\]
Therefore, on the event in \eqref{eq:S-fitted-Z-upper},
\begin{align*}
  &\overline{\mathcal L}_n\!\left(
    \widehat f_n^{(k)},
    \widehat\sigma_{n,k}^2
  \right)
  -
  \overline{\mathcal L}_n(f^0,\sigma_0^2)
  =
  \frac1n\sum_{i=1}^n
  \log\frac{
    p_{\widehat\theta,i}(Y_i)
  }{
    p_i^0(Y_i)
  }
  \le
  \frac4n\sum_{i=1}^n
  Z_{i,\widehat\theta}
  =
  4\bar Z_{\widehat\theta}
  \le
  C\frac{\rho_k\log n}{n},
\end{align*}
where the numerical factor is absorbed into $C$.

Thus, for every fixed $k\ge k_0$,
\[
  \mathbb P_0\left\{
    \overline{\mathcal L}_n\!\left(
      \widehat f_n^{(k)},
      \widehat\sigma_{n,k}^2
    \right)
    -
    \overline{\mathcal L}_n(f^0,\sigma_0^2)
    \le
    C\frac{\rho_k\log n}{n}
  \right\}
  \ge
  1-2c_1n^{-c_2}.
\]
Finally,
\[
  \overline{\mathcal L}_n\!\left(
    \widehat f_n^{(k)},
    \widehat\sigma_{n,k}^2
  \right)
  -
  \overline{\mathcal L}_n(f^0,\sigma_0^2)
  \ge0
\]
follows from the MLE property, because
$f^0\in\mathcal F_k$ for every $k\ge k_0$.
\end{proof}
\section{Proof of Lemma~3: Local geometric approximation bounds}
\label{sec:S-proof-geometry}

We prove the two assertions of Lemma~3 separately. Both are deterministic
approximation results and do not depend on the estimator, the sample size,
or the event that the correct number of segments is selected.

The first result gives the exact integrated squared error of the best affine
approximation to a continuous piecewise-linear function across a true change
in velocity. It is used both in the underfitting argument and in the first
stage of the changepoint-localization proof.

\subsection{The cubic one-segment approximation bound}
\label{subsec:S-cubic-bound}

\begin{lemma}[Best affine approximation across a change in velocity]
\label{lem:one-segment-approximation}
Let $\tau\in(0,1)$ and let $\epsilon>0$ be such that
$[\tau-\epsilon,\tau+\epsilon]\subset[0,1]$. Suppose that
$f^0:[\tau-\epsilon,\tau+\epsilon]\to\Rbb^d$ is continuous and
piecewise linear, with
$$
f^0(\tau+s)
=
\begin{cases}
f^0(\tau)+V_-s,
& -\epsilon\leq s\leq 0,\\[1mm]
f^0(\tau)+V_+s,
& 0\leq s\leq\epsilon,
\end{cases}
$$
where $V_-,V_+\in\Rbb^d$.

Let $\mathcal L_d(\tau,\epsilon)$ denote the class of all
$\Rbb^d$-valued affine functions on
$[\tau-\epsilon,\tau+\epsilon]$. Then
$$
\inf_{\ell\in\mathcal L_d(\tau,\epsilon)}
\int_{\tau-\epsilon}^{\tau+\epsilon}
\|\ell(t)-f^0(t)\|_2^2\,dt
=
\frac{\epsilon^3}{24}
\|V_+-V_-\|_2^2.
$$

In particular, if
$\|V_+-V_-\|_2\geq\underline C_2$, then
\begin{equation}
    \inf_{\ell\in\mathcal L_d(\tau,\epsilon)}
\int_{\tau-\epsilon}^{\tau+\epsilon}
\|\ell(t)-f^0(t)\|_2^2\,dt
\geq
\frac{\epsilon^3\underline C_2^2}{24}.\label{eq:S-cubic-lower-bound-C2}   
\end{equation}

\end{lemma}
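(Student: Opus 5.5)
The plan is to reduce the problem to an orthogonal projection in $L^2[-\epsilon,\epsilon]$, computed one coordinate at a time. Substitute $t=\tau+s$ with $s\in[-\epsilon,\epsilon]$. Any affine $\ell$ can be written as $\ell(\tau+s)=f^0(\tau)+\alpha+\beta s$ with $\alpha,\beta\in\Rbb^d$. Then decompose
$$
f^0(\tau+s)-f^0(\tau)=\tfrac12(V_++V_-)s+\tfrac12(V_+-V_-)|s|,
$$
which holds because $V_-s=\tfrac12(V_++V_-)s-\tfrac12(V_+-V_-)s$ for $s\le0$, and similarly for $s\ge 0$. The linear part $\tfrac12(V_++V_-)s$ is itself affine, so it can be absorbed into $\beta$. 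The problem therefore becomes
$$
\inf_{\alpha,\beta\in\Rbb^d}\int_{-\epsilon}^{\epsilon}\bigl\|\tfrac12(V_+-V_-)|s|-\alpha-\beta s\bigr\|_2^2\,ds .
$$
The squared Euclidean norm splits into a sum over coordinates, and $(\alpha,\beta)$ enter coordinatewise. Hence the infimum is $\tfrac14\|V_+-V_-\|_2^2$ times the scalar quantity $I:=\inf_{a,b\in\Rbb}\int_{-\epsilon}^{\epsilon}(|s|-a-bs)^2\,ds$.

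To compute $I$, use parity. The function $|s|$ is even and $s$ is odd on a symmetric interval, so $|s|$ is orthogonal to $s$, and the optimal slope is $b=0$. The optimal constant is the mean, $a=\tfrac1{2\epsilon}\int_{-\epsilon}^{\epsilon}|s|\,ds=\epsilon/2$. The residual is then
$$
I=\int_{-\epsilon}^{\epsilon}s^2\,ds-2\epsilon\cdot\frac{\epsilon^2}{4}=\frac{2\epsilon^3}{3}-\frac{\epsilon^3}{2}=\frac{\epsilon^3}{6}.
$$
Multiplying by $\tfrac14$ gives $\epsilon^3/24$, as claimed. Note that the infimum is attained, since the minimization is a finite-dimensional least-squares problem.

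The inequality \eqref{eq:S-cubic-lower-bound-C2} then follows immediately from $\|V_+-V_-\|_2\ge\underline C_2$. This also yields \eqref{eq:cubic-local-bound} in Lemma~\ref{lem:local-geometry}. Under the stated restriction on $\epsilon$, the window $[\tau_j^0-\epsilon,\tau_j^0+\epsilon]$ contains no other true changepoint, so $f^0$ has exactly the two-piece form assumed here with $V_-=V_j^0$ and $V_+=V_{j+1}^0$. Minimizing over affine maps on $[0,1]$ restricted to the window is the same as minimizing over affine maps on the window.

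The only step needing care is justifying the even/odd decomposition and the claim that the coordinatewise infima add. Both follow from the orthogonality of $\{1,s\}$ and $|s|$ in $L^2[-\epsilon,\epsilon]$ and from the separability of the norm, so I expect no real obstacle; the rest is routine integration.
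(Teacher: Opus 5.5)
Your proof is correct and reaches the same minimizer as the paper (offset $\frac{\epsilon}{4}(V_+-V_-)$ at $\tau$, slope $\frac12(V_++V_-)$), but by a different decomposition. The paper writes $\ell(\tau+s)=f^0(\tau)+a+vs$ and sets $\Delta V=V_+-V_-$. It expands the loss directly into the quadratic $2\epsilon\|a\|_2^2-\epsilon^2\langle a,\Delta V\rangle+\frac{\epsilon^3}{3}\{\|v-V_-\|_2^2+\|v-V_+\|_2^2\}$, notes that $a$ and $v$ decouple, and solves the first-order conditions. You instead split $f^0$ into an affine part plus the hinge $\frac12\Delta V|s|$, absorb the affine part, and compute the distance from $|s|$ to $\mathrm{span}\{1,s\}$ in $L^2[-\epsilon,\epsilon]$ by parity.

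The underlying fact is the same: the decoupling of $a$ and $v$ in the paper's expansion is exactly $\int_{-\epsilon}^{\epsilon}s\,ds=0$, which is your parity argument. Your route makes the structure more transparent. It shows at once that only the jump $\Delta V$ matters, and that the $d$-dimensional answer is $\frac14\|\Delta V\|_2^2$ times the single scalar constant $\epsilon^3/6$. It also identifies the optimal slope as the average velocity without differentiation. The paper's route is more mechanical, but it yields the full loss as an explicit function of $(a,v)$, with uniqueness of the minimizer from strict convexity.

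One small wording fix: in your last paragraph, $|s|$ is orthogonal to $s$ but not to $1$. Its projection onto constants is $\epsilon/2$, which your computation correctly uses, so the facts you actually need are $1\perp s$ and $|s|\perp s$.
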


\begin{proof}
Every $\ell\in\mathcal L_d(\tau,\epsilon)$ can be written uniquely as
$$
\ell(\tau+s)
=
f^0(\tau)+a+vs
$$
for some $a,v\in\Rbb^d$. Here,
$a=\ell(\tau)-f^0(\tau)$ is the approximation error at $\tau$, while
$v$ is the slope of $\ell$.

Define
$$
\Delta V:=V_+-V_-.
$$
Using the local representation of $f^0$, the integrated squared error is
$$
\begin{aligned}
\mathrm{Loss}(a,v)
&:=
\int_{\tau-\epsilon}^{\tau+\epsilon}
\|\ell(t)-f^0(t)\|_2^2\,dt=
\int_{-\epsilon}^{0}
\|a+(v-V_-)s\|_2^2\,ds+
\int_{0}^{\epsilon}
\|a+(v-V_+)s\|_2^2\,ds.
\end{aligned}
$$

Expanding and integrating gives
$$
\begin{aligned}
\mathrm{Loss}(a,v)
&=
2\epsilon\|a\|_2^2
-
\epsilon^2\langle a,\Delta V\rangle+
\frac{\epsilon^3}{3}
\left\{
\|v-V_-\|_2^2
+
\|v-V_+\|_2^2
\right\}.
\end{aligned}
$$

The dependence on $a$ and $v$ separates in this expression. Differentiating
with respect to $a$ gives
$$
4\epsilon a-\epsilon^2\Delta V=0,
$$
and therefore
$$
a^\star
=
\frac{\epsilon}{4}\Delta V.
$$

Differentiating with respect to $v$ gives
$$
2(v-V_-)+2(v-V_+)=0,
$$
and hence
$$
v^\star
=
\frac{V_-+V_+}{2}.
$$

Since $\mathrm{Loss}(a,v)$ is strictly convex in $(a,v)$, these values
define the unique minimizer. Thus, the best affine approximation is
$$
\ell^\star(\tau+s)
=
f^0(\tau)
+
\frac{\epsilon}{4}(V_+-V_-)
+
\frac{V_-+V_+}{2}s.
$$

At the minimizing values,
$$
v^\star-V_-
=
\frac{\Delta V}{2},
\qquad
v^\star-V_+
=
-\frac{\Delta V}{2}.
$$
Substituting $a^\star$ and $v^\star$ into the loss yields
$$
\begin{aligned}
\mathrm{Loss}(a^\star,v^\star)
&=
2\epsilon
\left\|
\frac{\epsilon}{4}\Delta V
\right\|_2^2
-
\epsilon^2
\left\langle
\frac{\epsilon}{4}\Delta V,
\Delta V
\right\rangle+
\frac{\epsilon^3}{3}
\left\{
\left\|
\frac{\Delta V}{2}
\right\|_2^2
+
\left\|
-\frac{\Delta V}{2}
\right\|_2^2
\right\}\\
&=
\left(
\frac18-\frac14+\frac16
\right)
\epsilon^3\|\Delta V\|_2^2=
\frac{\epsilon^3}{24}
\|\Delta V\|_2^2.
\end{aligned}
$$

Therefore,
$$
\inf_{\ell\in\mathcal L_d(\tau,\epsilon)}
\int_{\tau-\epsilon}^{\tau+\epsilon}
\|\ell(t)-f^0(t)\|_2^2\,dt
=
\frac{\epsilon^3}{24}
\|V_+-V_-\|_2^2.
$$
The final inequality follows immediately from
$\|V_+-V_-\|_2\geq\underline C_2$.
\end{proof}

To obtain the first assertion of Lemma~3, apply
Lemma~\ref{lem:one-segment-approximation} with
$$
\tau=\tau_j^0,
\qquad
V_-=V_j^0,
\qquad
V_+=V_{j+1}^0.
$$
For every
$$
0<\epsilon<
\frac12
\min\left\{
\tau_j^0-\tau_{j-1}^0,\,
\tau_{j+1}^0-\tau_j^0
\right\},
$$
the interval
$[\tau_j^0-\epsilon,\tau_j^0+\epsilon]$ contains no other true
changepoint. Hence,
$$
\begin{aligned}
\inf_{\ell\in\mathcal L_d}
\int_{\tau_j^0-\epsilon}^{\tau_j^0+\epsilon}
\|f^0(t)-\ell(t)\|_2^2\,dt
&=
\frac{\epsilon^3}{24}
\|V_{j+1}^0-V_j^0\|_2^2\geq
\frac{\underline C_2^2}{24}\epsilon^3.
\end{aligned}
$$
\subsection{Quadratic two-segment approximation bound}
\label{subsec:S-quadratic-bound}

We now prove the second assertion of
Lemma~3. Fix a true changepoint $\tau_j^0$ and
fixed points satisfying
$$
\frac{\tau_{j-1}^0+\tau_j^0}{2}
<t_1<\tau_j^0<t_2<
\frac{\tau_j^0+\tau_{j+1}^0}{2}.
$$
On $[t_1,\tau_j^0]$, the true signal has velocity $V_j^0$, while on
$[\tau_j^0,t_2]$, it has velocity $V_{j+1}^0$.

We use the following elementary identity. If
$r:[u,v]\to\Rbb^d$ is affine, then
$$
\int_u^v\|r(t)\|_2^2\,dt
=
\frac{v-u}{3}
\left\{
\|r(u)\|_2^2
+
\langle r(u),r(v)\rangle
+
\|r(v)\|_2^2
\right\}.
$$
Since
$$
\|p\|_2^2+\langle p,q\rangle+\|q\|_2^2
\geq
\frac12
\left(
\|p\|_2^2+\|q\|_2^2
\right),
$$
it follows that
\begin{equation}
\int_u^v\|r(t)\|_2^2\,dt
\geq
\frac{v-u}{6}
\left\{
\|r(u)\|_2^2+\|r(v)\|_2^2
\right\}.
\label{eq:affine-endpoint-lower-bound}
\end{equation}

Let $\tau\in(t_1,t_2)$ and
$g\in\mathcal G_2(\tau;t_1,t_2)$. By symmetry of the argument, it is
enough to consider the case
$
\tau\leq\tau_j^0.
$
The case $\tau\geq\tau_j^0$ follows by interchanging the roles of the
left and right subintervals and of the velocities
$V_j^0$ and $V_{j+1}^0$.

Set
$
\eta:=\tau_j^0-\tau,
a:=\tau-t_1,
b:=t_2-\tau_j^0.
$
Define the residual
$
\Delta g:=g-f^0
$
and write
$
x:=\Delta g(t_1),
y:=\Delta g(\tau),
z:=\Delta g(\tau_j^0),
w:=\Delta g(t_2).
$

The residual $\Delta g$ is affine on each of the intervals
$[t_1,\tau]$, $[\tau,\tau_j^0]$, and
$[\tau_j^0,t_2]$. Therefore,
\eqref{eq:affine-endpoint-lower-bound} gives
$$
\begin{aligned}
\int_{t_1}^{t_2}
\|\Delta g(t)\|_2^2\,dt
&\geq
\frac{a}{6}
\left(
\|x\|_2^2+\|y\|_2^2
\right)
+
\frac{\eta}{6}
\left(
\|y\|_2^2+\|z\|_2^2
\right)\\
&\quad+
\frac{b}{6}
\left(
\|z\|_2^2+\|w\|_2^2
\right).
\end{aligned}
$$

Choose $\delta_j>0$ such that
$
\delta_j
\leq
\frac12
\min\left\{
\tau_j^0-t_1,\,
t_2-\tau_j^0
\right\}.
$
Whenever $0\leq\eta\leq\delta_j$,
$$
a
=
\tau_j^0-t_1-\eta
\geq
\frac12(\tau_j^0-t_1).
$$
Hence, there exists a constant $c_{0,j}>0$, independent of $\tau$ and
$g$, such that
\begin{equation}
\int_{t_1}^{t_2}
\|\Delta g(t)\|_2^2\,dt
\geq
c_{0,j}
\left(
\|y\|_2^2+\|z\|_2^2+\|w\|_2^2
\right).
\label{eq:quadratic-loss-residual-bound}
\end{equation}

Because $g$ has a single velocity on $[\tau,t_2]$, let $v_g$ denote
that velocity. On $[\tau,\tau_j^0]$,
$
\dfrac{z-y}{\eta}
=
v_g-V_j^0,
$
whereas on $[\tau_j^0,t_2]$,
$
\dfrac{w-z}{b}
=
v_g-V_{j+1}^0.
$
Subtracting gives
$
\dfrac{z-y}{\eta}
-
\dfrac{w-z}{b}
=
V_{j+1}^0-V_j^0.
$
Equivalently,
$
(b+\eta)z-by-\eta w
=
\eta b\left(V_{j+1}^0-V_j^0\right).
$

By the Cauchy--Schwarz inequality,
$$
\begin{aligned}
\eta^2b^2
\|V_{j+1}^0-V_j^0\|_2^2
&=
\|(b+\eta)z-by-\eta w\|_2^2\\
&\leq
\left\{
(b+\eta)^2+b^2+\eta^2
\right\}
\left(
\|y\|_2^2+\|z\|_2^2+\|w\|_2^2
\right).
\end{aligned}
$$
Since $\eta\leq\delta_j\leq b/2$,
$$
(b+\eta)^2+b^2+\eta^2
\leq
4b^2.
$$
Therefore,
$$
\|y\|_2^2+\|z\|_2^2+\|w\|_2^2
\geq
\frac{\eta^2}{4}
\|V_{j+1}^0-V_j^0\|_2^2.
$$

Combining this with
\eqref{eq:quadratic-loss-residual-bound} yields
$$
\int_{t_1}^{t_2}
\|g(t)-f^0(t)\|_2^2\,dt
\geq
\frac{c_{0,j}}{4}
|\tau-\tau_j^0|^2
\|V_{j+1}^0-V_j^0\|_2^2.
$$

The same bound, possibly with a different constant, holds when
$\tau\geq\tau_j^0$ by the symmetry described above. Redefining $c_j>0$
as the smaller of the two constants, we obtain
$$
\inf_{g\in\mathcal G_2(\tau;t_1,t_2)}
\int_{t_1}^{t_2}
\|f^0(t)-g(t)\|_2^2\,dt
\geq
c_j
|\tau-\tau_j^0|^2
\|V_{j+1}^0-V_j^0\|_2^2
$$
whenever $|\tau-\tau_j^0|\leq\delta_j$.

Finally, by the minimum-jump condition,
$$
\|V_{j+1}^0-V_j^0\|_2
\geq
\underline C_2,
$$
and therefore
$$
\inf_{g\in\mathcal G_2(\tau;t_1,t_2)}
\int_{t_1}^{t_2}
\|f^0(t)-g(t)\|_2^2\,dt
\geq
c_j\underline C_2^2
|\tau-\tau_j^0|^2.
$$
This proves the second assertion of
Lemma~3.
\section{Proof of Theorem~4: Consistency of the penalized CPL estimator}

We first record an auxiliary empirical-process bound that will be used
in the proof of Theorem~4. It controls uniformly, over all candidate
piecewise-linear signals with at most $\overline k$ segments, the
cross-term between the Gaussian noise and the signal approximation
error.

\label{sec:S-proof-consistency}

\begin{lemma}[Uniform control of the noise cross-term]
\label{lem:S-uniform-cross-term}

Suppose that the initial-position and velocity parameter spaces are
compact, that $\overline k<\infty$ is fixed, and that the candidate
changepoints belong to the observation grid
$\{t_1,\dots,t_n\}$. Let
$$
\varepsilon_i
\overset{\mathrm{ind.}}{\sim}
\mathcal N_d(0,\sigma_0^2I_d),
\qquad i=1,\dots,n.
$$
Then
$$
\sup_{1\leq k\leq\overline k}
\sup_{f\in\mathcal F_k}
\left|
\frac1n
\sum_{i=1}^n
\left\langle
\varepsilon_i,
f(t_i)-f^0(t_i)
\right\rangle
\right|
=
O_{\Pbb}\left(
\sqrt{\frac{\log n}{n}}
\right).
$$
\end{lemma}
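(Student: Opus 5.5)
Since $\overline k$ is fixed and the classes are nested, $\mathcal F_k\subseteq\mathcal F_{\overline k}$ for every $k\le\overline k$. It therefore suffices to bound the supremum over $\mathcal F_{\overline k}$. The plan is to condition on the discrete changepoint configuration, reduce each configuration to a finite-dimensional Gaussian projection, and take a union bound over the at most $\binom{n}{\overline k-1}\le n^{\overline k-1}$ grid configurations.

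Fix a changepoint vector $\tbf\in\mathcal T_n^{\overline k-1}$. By the hinge representation \eqref{eq:signal}, every $f=f_{\tbf,\Vbf,\av}$ lies in the linear span of the functions $e_m\cdot 1$, $e_m t$ and $e_m(t-\tau_j)_+$, where $m\le d$ and $j\le\overline k-1$. Hence the vectors $(f(t_i)-f^0(t_i))_{i\le n}\in\Rbb^{nd}$, with $f$ ranging over signals with changepoints $\tbf$, lie in a linear subspace $S_{\tbf}$ of dimension at most $D:=d(\overline k+1)+d(k_0+1)$. This subspace is spanned by the hinge basis for $\tbf$ together with that for $\tbf^0$. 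Writing $\boldsymbol\varepsilon=(\varepsilon_i)$ and $\mathbf u=(f(t_i)-f^0(t_i))_i$, we have
\[
\Bigl|\tfrac1n\sum_i\langle\varepsilon_i,f(t_i)-f^0(t_i)\rangle\Bigr|
=\tfrac1n|\langle\boldsymbol\varepsilon,\mathbf u\rangle|
\le \tfrac1n\|\Pi_{S_{\tbf}}\boldsymbol\varepsilon\|_2\,\|\mathbf u\|_2 .
\]
Compactness of $\mathcal V$ and $\mathcal A$ gives $\sup_{t}\|f(t)-f^0(t)\|_2\le M$ uniformly over the class, so $\|\mathbf u\|_2\le M\sqrt n$. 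Thus the cross-term is at most $M n^{-1/2}\|\Pi_{S_{\tbf}}\boldsymbol\varepsilon\|_2$.

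The quantity $\|\Pi_{S_{\tbf}}\boldsymbol\varepsilon\|_2^2/\sigma_0^2$ is $\chi^2$ with at most $D$ degrees of freedom. The Laurent--Massart bound gives $\Pbb(\|\Pi_{S}\boldsymbol\varepsilon\|_2^2\ge\sigma_0^2(D+2\sqrt{Dx}+2x))\le e^{-x}$. Take $x=\overline k\log n$ and union-bound over the $n^{\overline k-1}$ configurations. The failure probability is then at most $n^{-1}\to0$. On the complementary event, $\sup_{\tbf}\|\Pi_{S_{\tbf}}\boldsymbol\varepsilon\|_2^2\le C\log n$, and hence the supremum of the cross-term is at most $M\sqrt{C\log n/n}$. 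This is the claimed $O_{\Pbb}(\sqrt{\log n/n})$ bound.

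The argument has one conceptual point, namely that the continuous parameters are absorbed exactly by the projection. No chaining over $(\Vbf,\av)$ is required; the only price is the crude bound $\|\mathbf u\|_2\le M\sqrt n$. The sup-norm bound $M$ itself needs care, since the hinge coefficients $V_{j+1}-V_j$ are bounded by $2\,\mathrm{diam}$-type constants of $\mathcal V$. As a result, $M$ depends only on $\overline k$, $\mathcal V$, $\mathcal A$ and $f^0$. One could instead obtain the rate through the bracketing entropy of Step~1.3 in the proof of Lemma~\ref{lem:S-rates} and a Gaussian chaining argument, but the projection route is cleaner and avoids tracking entropy integrals.
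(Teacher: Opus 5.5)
Your proof is correct, but it takes a different route from the paper's.

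The paper handles the continuous parameters by discretization. For each changepoint configuration it builds an $n^{-2}$-net in the supremum norm over $(\av,\Vbf)$, so that the union over all configurations and all $k\le\overline k$ has polynomial cardinality $Cn^A$. It then applies a Gaussian tail bound to each net element, using the variance bound $\sigma_0^2M^2/n$, and takes a union bound. Finally it controls the discretization error by $n^{-2}\cdot n^{-1}\sum_i\|\varepsilon_i\|_2=O_{\Pbb}(n^{-2})$.

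You instead use the fact that, once $\tbf$ is fixed, $f-f^0$ is linear in the continuous parameters. The supremum over $(\av,\Vbf)$ is therefore taken exactly, by Cauchy--Schwarz against the projection of the noise onto a deterministic subspace of dimension at most $D$. This leaves a single $\chi^2_D$ variable per configuration. Only the union over the at most $n^{\overline k-1}$ grid configurations produces the $\log n$.

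Your route has three advantages:
\begin{itemize}
\item It needs no covering of $\mathcal V$ or $\mathcal A$ and no discretization-error step; compactness enters only through the sup bound $M$.
\item It gives an explicit failure probability of $n^{-1}$.
\item If you keep $\|\mathbf u\|_2=\sqrt n\,\|f-f^0\|_n$ instead of bounding it by $M\sqrt n$, you get the stronger self-normalized bound $\bigl|n^{-1}\sum_i\langle\varepsilon_i,f(t_i)-f^0(t_i)\rangle\bigr|\le\|f-f^0\|_n\,O_{\Pbb}\bigl(\sqrt{\log n/n}\bigr)$, uniformly over the class. That is exactly the ingredient a direct least-squares proof of the signal rate in Lemma~2 would use.
\end{itemize}

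The paper's net argument, in exchange, never uses linearity in the parameters. It would therefore carry over unchanged to any mean parametrization that is Lipschitz on a compact parameter set.
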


\begin{proof}

For $f\in\mathcal F_k$, define
$$
Z_n(f)
:=
\frac1n
\sum_{i=1}^n
\left\langle
\varepsilon_i,
f(t_i)-f^0(t_i)
\right\rangle.
$$

Because the initial-position and velocity parameter spaces are compact
and $\overline k$ is fixed, there exists a constant $M<\infty$ such
that
\begin{equation}
\label{eq:S-uniform-signal-bound}
\sup_{1\leq k\leq\overline k}
\sup_{f\in\mathcal F_k}
\sup_{t\in[0,1]}
\|f(t)-f^0(t)\|_2
\leq M.
\end{equation}

Fix $k\leq\overline k$ and a particular configuration of the
$k-1$ changepoints. Conditional on this configuration, a signal
$f\in\mathcal F_k$ is indexed by a finite-dimensional parameter vector
consisting of its initial position and segment velocities. Denote the
dimension of this parameter vector by $\rho_k$. Compactness of the
parameter space and the Lipschitz dependence of $f$ on its parameters
imply that, for every $\eta>0$, this fixed-configuration class admits an
$\eta$-net in the supremum norm with cardinality at most
$
C\eta^{-\rho_k},
$
where $C$ does not depend on the changepoint configuration.

There are at most $n^{k-1}$ possible configurations of the
$k-1$ changepoints. Taking $\eta_n=n^{-2}$ and combining the nets over
all configurations and all $k\leq\overline k$, we obtain a finite set
$\mathcal N_n$ such that
$
|\mathcal N_n|
\leq
C
\sum_{k=1}^{\overline k}
n^{k-1}n^{2\rho_k}
\leq
Cn^A
$
for some constants $C,A<\infty$, and such that, for every
$f\in\bigcup_{k\leq\overline k}\mathcal F_k$, there exists
$\widetilde f\in\mathcal N_n$ satisfying
\begin{equation}
\label{eq:S-cross-term-net-approximation}
\max_{1\leq i\leq n}
\|f(t_i)-\widetilde f(t_i)\|_2
\leq
n^{-2}.
\end{equation}

For each fixed $\widetilde f\in\mathcal N_n$, the random variable
$Z_n(\widetilde f)$ is centered Gaussian. By
\eqref{eq:S-uniform-signal-bound},
$$
\begin{aligned}
\operatorname{Var}_0\{Z_n(\widetilde f)\}
&=
\frac{\sigma_0^2}{n^2}
\sum_{i=1}^n
\|\widetilde f(t_i)-f^0(t_i)\|_2^2\leq
\frac{\sigma_0^2M^2}{n}.
\end{aligned}
$$
Therefore, for every $x>0$,
$
\Pbb_0\left(
|Z_n(\widetilde f)|>x
\right)
\leq
2\exp\left(
-\dfrac{nx^2}{2\sigma_0^2M^2}
\right).
$
A union bound over $\mathcal N_n$ gives
$$
\begin{aligned}
\Pbb_0\left(
\max_{\widetilde f\in\mathcal N_n}
|Z_n(\widetilde f)|>x
\right)
&\leq
2|\mathcal N_n|
\exp\left(
-\frac{nx^2}{2\sigma_0^2M^2}
\right)\leq
2Cn^A
\exp\left(
-\frac{nx^2}{2\sigma_0^2M^2}
\right).
\end{aligned}
$$
Taking
$
x=K\sqrt{\frac{\log n}{n}}
$
with $K$ sufficiently large shows that
\begin{equation}
\label{eq:S-cross-term-net-bound}
\max_{\widetilde f\in\mathcal N_n}
|Z_n(\widetilde f)|
=
O_{\Pbb}\left(
\sqrt{\frac{\log n}{n}}
\right).
\end{equation}

It remains to control the approximation error between an arbitrary
$f$ and its net representative $\widetilde f$. By
\eqref{eq:S-cross-term-net-approximation},
$$
\begin{aligned}
|Z_n(f)-Z_n(\widetilde f)|
&\leq
\frac1n
\sum_{i=1}^n
\|\varepsilon_i\|_2
\,
\|f(t_i)-\widetilde f(t_i)\|_2\leq
n^{-2}
\left(
\frac1n\sum_{i=1}^n\|\varepsilon_i\|_2
\right).
\end{aligned}
$$
Since
$
\frac1n\sum_{i=1}^n\|\varepsilon_i\|_2
=
O_{\Pbb}(1),
$
we have
$$
\sup_{1\leq k\leq\overline k}
\sup_{f\in\mathcal F_k}
\inf_{\widetilde f\in\mathcal N_n}
|Z_n(f)-Z_n(\widetilde f)|
=
O_{\Pbb}(n^{-2}).
$$
Combining this with
\eqref{eq:S-cross-term-net-bound} proves that
$$
\sup_{1\leq k\leq\overline k}
\sup_{f\in\mathcal F_k}
|Z_n(f)|
=
O_{\Pbb}\left(
\sqrt{\frac{\log n}{n}}
\right).
$$
\end{proof}

\begin{restatedtheorem}[Theorem~4 of the main text]
Suppose the conditions of Lemma~2 hold, the minimum-spacing and minimum-jump
conditions in the main text are satisfied, $\overline k\ge k_0$, and
$\gamma>1$. Then the penalized estimator satisfies
\[
  \Pbb\!\left(\widehat k_n=k_0,\ \max_{1\le j\le k_0-1}
  |\widehat\tau_{n,j}-\tau_j^0|\le C(\log n/n)^{1/2}\right)\to1.
\]
\end{restatedtheorem}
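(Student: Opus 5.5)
The proof splits into three parts: overfitting, underfitting, and localization once $\widehat k_n=k_0$. Since $\overline k$ is fixed, it suffices to show $\Pbb(\widehat k_n=k)\to0$ for each $k\ne k_0$ separately and then to prove the location bound on the event $\{\widehat k_n=k_0\}$.

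\textbf{Overfitting.} Fix $k>k_0$. By Lemma~\ref{lem:rates}, applied to both $k$ and $k_0$, with probability $1-O(n^{-c_2})$ the two normalized log-likelihoods differ from $\overline{\mathcal L}_n(f^0,\sigma_0^2)$ by at most $C\rho_k\log n/n$. Hence their difference is $O(\log n/n)$. The penalty difference is $(\rho_k-\rho_{k_0})(\log n)^\gamma/n$, which is at least $(\log n)^\gamma/n$ because $\rho_k$ is strictly increasing in $k$. Since $\gamma>1$, it exceeds twice the likelihood gain for large $n$, so the sSIC criterion \eqref{eq:S-sSIC} strictly prefers $k_0$ to $k$.

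\textbf{Underfitting.} Fix $k<k_0$ and let $f\in\mathcal F_k$. Use \eqref{eq:min-spacing} to choose disjoint windows $I_j=[\tau_j^0-\epsilon,\tau_j^0+\epsilon]$ with $\epsilon=\underline C_1/4$. Since $f$ has at most $k-1<k_0-1$ changepoints, some $I_j$ contains none, so $f$ is affine on it. Lemma~\ref{lem:local-geometry} then gives $\int_0^1\|f-f^0\|^2\ge \underline C_2^2\epsilon^3/24=:\Delta_0$. Functions in $\mathcal F_{\overline k}$ are uniformly Lipschitz and bounded, so $\|f-f^0\|_n^2\ge\Delta_0/2$ for large $n$, uniformly in $f$. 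Expanding
\[\frac1n\sum_i\|Y_i-f(t_i)\|^2=\frac1n\sum_i\|\varepsilon_i\|^2+2Z_n(f)+\|f-f^0\|_n^2,\]
applying the law of large numbers to the first term and Lemma~\ref{lem:S-uniform-cross-term} to $Z_n$ gives $\inf_{f\in\mathcal F_k}\frac{1}{dn}\sum_i\|Y_i-f(t_i)\|^2\ge\sigma_0^2+c_0$ with probability tending to one. Profiling $\sigma^2$ over $\Omega$ for fixed residual sum of squares, together with $\sigma_0^2$ lying in the interior of $\Omega$, yields $\overline{\mathcal L}_n(\widehat f^{(k)}_n,\widehat\sigma^2_{n,k})\le \overline{\mathcal L}_n(f^0,\sigma_0^2)-c'$ for some $c'>0$. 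The penalty differences are only $O((\log n)^\gamma/n)$, so underfitting is excluded.

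\textbf{Localization} (on $\widehat k_n=k_0$, with $\widehat f=\widehat f_n^{(k_0)}$). By Lemma~\ref{lem:rates}, $\|\widehat f-f^0\|_n^2\le C\log n/n$ with high probability.

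\emph{Stage 1.} Suppose no $\widehat\tau_{n,l}$ lies within $\epsilon_n$ of $\tau_j^0$. Then $\widehat f$ is affine on $[\tau_j^0-\epsilon_n,\tau_j^0+\epsilon_n]$. The cubic bound, transferred to the grid with Riemann error $O(1/n)$ per unit length (negligible once $n\epsilon_n\to\infty$), gives $\|\widehat f-f^0\|_n^2\gtrsim\epsilon_n^3$. This contradicts the rate when $\epsilon_n=K(\log n/n)^{1/3}$ with $K$ large. So every true changepoint has an estimated one within $\epsilon_n$. Because there are exactly $k_0-1$ estimated changepoints and the true ones are separated by $\underline C_1$, the correspondence is one-to-one and order-preserving, $\widehat\tau_{n,j}\leftrightarrow\tau_j^0$.

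\emph{Stage 2.} Fix $t_1,t_2$ as in Lemma~\ref{lem:local-geometry}. On the high-probability event of Stage~1, for large $n$ the window $[t_1,t_2]$ contains only $\widehat\tau_{n,j}$, so $\widehat f|_{[t_1,t_2]}\in\mathcal G_2(\widehat\tau_{n,j};t_1,t_2)$ and $|\widehat\tau_{n,j}-\tau^0_j|\le\delta_j$. The quadratic bound \eqref{eq:quadratic-local-bound-min-jump} gives $\int_{t_1}^{t_2}\|\widehat f-f^0\|^2\ge c_j\underline C_2^2|\widehat\tau_{n,j}-\tau_j^0|^2$.

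The main obstacle is transferring this integral to the empirical norm. The integrand is piecewise quadratic with at most three breakpoints in $[t_1,t_2]$ and is uniformly Lipschitz with bounded values, so the Riemann-sum error is $O(1/n)$, which is below $\log n/n$. This gives $|\widehat\tau_{n,j}-\tau_j^0|^2\lesssim \|\widehat f-f^0\|_n^2+O(1/n)\lesssim\log n/n$. Taking the maximum over the finitely many $j$ and intersecting the finitely many high-probability events proves \eqref{eq:consistency}.
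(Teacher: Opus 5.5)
Your proposal is correct and follows essentially the same route as the paper. Overfitting is excluded by comparing the Lemma~2 likelihood bounds with the sSIC penalty gap. Underfitting is excluded by the cubic population gap, the uniform cross-term lemma, and variance profiling. Localization is two-stage, cubic then quadratic, with an $O(1/n)$ Riemann-sum transfer; your deterministic argument on the intersected high-probability event is a slightly cleaner version of the paper's subsequence argument. One minor slip: in your residual expansion the cross term should be $-2Z_n(f)$ rather than $+2Z_n(f)$, which is harmless since only $|Z_n(f)|$ is controlled.
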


\begin{proof} Our estimate for the number of changepoints is given by
$$\widehat{k}_n = \argmax_{k\leq \overline{k}} \left[2\overline{\mathcal{L}}_n(\widehat{f}_n^{(k)}, \widehat{\sigma}^2_{n, k}) - \rho_k \dfrac{(\log(n))^{\gamma}}{n}\right],$$
where $\rho_k=d(k+1)+k$ is the number of parameters of the model.

We aim to prove that $$\Pbb(\widehat{k}_n = k_0) \to 1$$ by first showing that $\Pbb(\widehat{k}_n > k_0) \to 0$ and then $\Pbb(\widehat{k}_n < k_0) \to 0$ as $n\to \infty$.
\paragraph{Asymptotically, sSIC does not over-fit.}
From Lemma~\ref{lem:S-rates}, we have with probability tending to 1, for all $k\geq k_0$,
$$0\leq \overline{\mathcal{L}}_n(\widehat{f}_n^{(k)}, \widehat{\sigma}^2_{n, k}) - \overline{\mathcal{L}}_n({f}^0, \sigma_0^2) \leq C\rho_k\dfrac{\log(n)}{n},$$
for some positive constant $C$ depending on $\overline{k}$ and the parameter space. Therefore, for all $k > k_0$ (and $k < \overline{k})$, the increase in average log-likelihood when over-fitting can be characterized by
\begin{align*}
    2\overline{\mathcal{L}}_n(\widehat{f}_n^{(k)}, \widehat{\sigma}^2_{n, k}) - 2\overline{\mathcal{L}}_n(\widehat{f}_n^{(k_0)}, \widehat{\sigma}^2_{n, k_0}) & \leq  2\overline{\mathcal{L}}_n(\widehat{f}_n^{(k)}, \widehat{\sigma}^2_{n, k}) - 2\overline{\mathcal{L}}_n({f}^0, \sigma_0^2) \\
    & \leq 2C \rho_k \dfrac{\log(n)}{n} = 2C [d(k+1)+k] \dfrac{\log(n)}{n} \\
    & < (d+1)(k - k_0) \dfrac{(\log(n))^{\gamma}}{n},
\end{align*}
for all $n$ large enough, since $\gamma>1$ and $d$ and $\overline{k}$ are fixed, so the constants can be chosen uniformly over $k_0\le k\le\overline{k}$. Hence,
\begin{align*}
& 2\overline{\mathcal{L}}_n(\widehat{f}_n^{(k_0)}, \widehat{\sigma}^2_{n, k_0}) - \rho_{k_0} \dfrac{(\log(n))^{\gamma}}{n} \\
& > 2\overline{\mathcal{L}}_n(\widehat{f}_n^{(k)}, \widehat{\sigma}^2_{n, k}) - \rho_k \dfrac{(\log(n))^{\gamma}}{n}\quad \forall k\in [k_0 + 1, \overline{k}],
\end{align*}
for all sufficiently large $n$, implying that
$$\Pbb(\widehat{k}_n > k_0) \to 0,$$
as $n\to \infty$.

\paragraph{Asymptotically, sSIC does not underfit.}

For $f\in\mathcal F_k$, define the normalized residual sum of squares
$
R_n(f)
:=
\frac{1}{dn}
\sum_{i=1}^n
\|Y_i-f(t_i)\|_2^2,
$
and write
$
\widehat R_{n,k}
:=
R_n\left(\widehat f_n^{(k)}\right).
$

We first establish a uniform positive approximation gap for every
underfitted model. Let
$
\epsilon_0:=\dfrac{\underline C_1}{4},
$
and, for each true changepoint $\tau_j^0$, define
$$
I_j
:=
[\tau_j^0-\epsilon_0,\tau_j^0+\epsilon_0],
\qquad
j=1,\dots,k_0-1.
$$
By the minimum-spacing condition, these intervals are pairwise disjoint
and each contains exactly one true changepoint.

Fix $k<k_0$ and $f\in\mathcal F_k$. The function $f$ has at most
$k-1<k_0-1$ changepoints. Therefore, at least one interval $I_j$
contains no changepoint of $f$, so that $f$ is affine on $I_j$.
By the cubic one-segment approximation bound in Lemma~3 (i.e., 
Lemma~\ref{lem:one-segment-approximation}),
$$
\begin{aligned}
\int_0^1
\|f(t)-f^0(t)\|_2^2\,dt
&\geq
\int_{I_j}
\|f(t)-f^0(t)\|_2^2\,dt\geq
\frac{\epsilon_0^3}{24}
\|V_{j+1}^0-V_j^0\|_2^2 \geq
\frac{\epsilon_0^3\underline C_2^2}{24}.
\end{aligned}
$$
Consequently, defining
$
\Delta_0
:=
\dfrac{\epsilon_0^3\underline C_2^2}{24}
>0,
$
we have
\begin{equation}
\label{eq:S-underfit-population-gap}
\inf_{1\leq k<k_0}
\inf_{f\in\mathcal F_k}
\int_0^1
\|f(t)-f^0(t)\|_2^2\,dt
\geq
\Delta_0.
\end{equation}

Because the initial-position and velocity parameter spaces are compact,
there exist constants $M,L<\infty$, independent of
$f\in\bigcup_{k<k_0}\mathcal F_k$, such that
$
\sup_{t\in[0,1]}
\|f(t)-f^0(t)\|_2
\leq M
$
and
$
\|(f-f^0)(t)-(f-f^0)(s)\|_2
\leq
L|t-s|.
$
Hence, the functions
$
q_f(t):=\|f(t)-f^0(t)\|_2^2
$
are uniformly Lipschitz. Since $t_i=i/n$, their Riemann-sum errors
satisfy
$$
\sup_{1\leq k<k_0}
\sup_{f\in\mathcal F_k}
\left|
\|f-f^0\|_n^2
-
\int_0^1
\|f(t)-f^0(t)\|_2^2\,dt
\right|
\leq
\frac{C}{n}
$$
for some constant $C<\infty$. We have, for every $1\leq k<k_0$ and every $f\in\mathcal F_k$,
$$
\|f-f^0\|_n^2
\geq
\int_0^1
\|f(t)-f^0(t)\|_2^2\,dt
-
\frac{C}{n}.
$$
Taking the infimum over $f\in\mathcal F_k$ and then over
$1\leq k<k_0$ gives
$$
\begin{aligned}
\inf_{1\leq k<k_0}
\inf_{f\in\mathcal F_k}
\|f-f^0\|_n^2
&\geq
\inf_{1\leq k<k_0}
\inf_{f\in\mathcal F_k}
\left\{
\int_0^1
\|f(t)-f^0(t)\|_2^2\,dt
-
\frac{C}{n}
\right\}\\
&=
\inf_{1\leq k<k_0}
\inf_{f\in\mathcal F_k}
\int_0^1
\|f(t)-f^0(t)\|_2^2\,dt
-
\frac{C}{n}\\
&\overset{\eqref{eq:S-underfit-population-gap}}{\geq}
\Delta_0-\frac{C}{n},
\end{aligned}
$$
where the equality follows because $C/n$ does not depend on $k$ or
$f$. Since $\Delta_0>0$, for all sufficiently large $n$,
$C/n\leq\Delta_0/2$. Therefore,
\begin{equation}
    \inf_{1\leq k<k_0}
\inf_{f\in\mathcal F_k}
\|f-f^0\|_n^2
\geq
\frac{\Delta_0}{2}.\label{eq:S-underfit-empirical-gap}
\end{equation}

Write
$$
Y_i=f^0(t_i)+\varepsilon_i,
\qquad
\varepsilon_i
\overset{\mathrm{ind.}}{\sim}
\mathcal N_d(0,\sigma_0^2I_d).
$$
For any $f\in\mathcal F_k$,
$$
\begin{aligned}
R_n(f)
&=
\frac{1}{dn}
\sum_{i=1}^n
\|\varepsilon_i\|_2^2
+
\frac{1}{d}\|f-f^0\|_n^2-
\frac{2}{dn}
\sum_{i=1}^n
\left\langle
\varepsilon_i,
f(t_i)-f^0(t_i)
\right\rangle.
\end{aligned}
$$

By Lemma~\ref{lem:S-uniform-cross-term},
\begin{equation}
\label{eq:S-underfit-cross-term}
\sup_{1\leq k<k_0}
\sup_{f\in\mathcal F_k}
\left|
\frac1n
\sum_{i=1}^n
\left\langle
\varepsilon_i,
f(t_i)-f^0(t_i)
\right\rangle
\right|
=
O_{\Pbb}\left(
\sqrt{\frac{\log n}{n}}
\right)
=
o_{\Pbb}(1).
\end{equation}

By the law of large numbers,
$
(dn)^{-1}
\sum_{i=1}^n
\|\varepsilon_i\|_2^2
\longrightarrow
\sigma_0^2
$
in probability. Combining this fact with
\eqref{eq:S-underfit-empirical-gap} and
\eqref{eq:S-underfit-cross-term} gives
$$
\begin{aligned}
\inf_{1\leq k<k_0}\widehat R_{n,k}
&\geq
\frac{1}{dn}\sum_{i=1}^n\|\varepsilon_i\|_2^2
+
\frac{\Delta_0}{2d}-
\frac{2}{d}
\sup_{1\leq k<k_0}
\sup_{f\in\mathcal F_k}
\left|
\frac1n\sum_{i=1}^n
\left\langle
\varepsilon_i,f(t_i)-f^0(t_i)
\right\rangle
\right|\\
&=
\sigma_0^2+\frac{\Delta_0}{2d}+o_{\Pbb}(1).
\end{aligned}
$$
Therefore, with probability tending to one,
$$
\inf_{1\leq k<k_0}\widehat R_{n,k}
\geq
\sigma_0^2+\frac{\Delta_0}{4d}.
$$
In contrast, Lemma~2
implies
$$
\widehat R_{n,k_0}
\longrightarrow
\sigma_0^2
$$
in probability.

It follows that there exists a constant $c_0>0$ such that, with
probability tending to one,
\begin{equation}
\label{eq:S-underfit-rss-separation}
\widehat R_{n,k}
\geq
\widehat R_{n,k_0}+c_0,
\qquad
k=1,\dots,k_0-1.
\end{equation}

For $s>0$, define the profiled Gaussian loss
$$
Q(s)
:=
\inf_{\sigma^2\in\Omega}
\left\{
d\log(\sigma^2)
+
\frac{ds}{\sigma^2}
\right\}.
$$
Since $\Omega$ is compact and bounded away from zero, $Q$ is continuous
and strictly increasing on $(0,\infty)$. Moreover,
\eqref{eq:S-underfit-rss-separation} and
$\widehat R_{n,k_0}\to\sigma_0^2$ in probability imply that there exists
a constant $c_1>0$ such that, with probability tending to one,
$$
Q(\widehat R_{n,k})
-
Q(\widehat R_{n,k_0})
\geq
c_1,
\qquad
k=1,\dots,k_0-1.
$$

On the other hand, because $\overline k$ is fixed,
$$
0
\leq
(\rho_{k_0}-\rho_k)
\frac{(\log n)^\gamma}{n}
\longrightarrow
0
$$
for every $k<k_0$. Thus, for all sufficiently large $n$, the fixed
positive profiled-likelihood gap dominates the smaller penalty assigned
to every underfitted model. Consequently,
$$
\Pbb(\widehat k_n<k_0)
\longrightarrow
0.
$$

Combining this result with the preceding overfitting argument gives
$$
\Pbb(\widehat k_n=k_0)
\longrightarrow
1.
$$

\paragraph{Convergence of the changepoints.}

We have
\begin{align*}
&\Pbb\left(
\widehat k_n=k_0,\,
\max_{1\leq i\leq k_0-1}
|\widehat\tau_i^n-\tau_i^0|
\leq
C_\tau
\left(\frac{\log n}{n}\right)^{1/2}
\right)
\\
&\qquad=
\Pbb\left(
\max_{1\leq i\leq k_0-1}
|\widehat\tau_i^n-\tau_i^0|
\leq
C_\tau
\left(\frac{\log n}{n}\right)^{1/2}
\,\middle|\,
\widehat k_n=k_0
\right)
\Pbb(\widehat k_n=k_0).
\end{align*}
Since we have already shown that
$$
\Pbb(\widehat k_n=k_0)\longrightarrow 1,
$$
it remains to prove that, for a sufficiently large constant
$C_\tau<\infty$,
\begin{equation}
\label{eq:qed-conv-cps}
\Pbb\left(
\max_{1\leq i\leq k_0-1}
|\widehat\tau_i^n-\tau_i^0|
\leq
C_\tau
\left(\frac{\log n}{n}\right)^{1/2}
\,\middle|\,
\widehat k_n=k_0
\right)
\longrightarrow 1.
\end{equation}

By Lemma~2, there exist constants $C_f,c_1,c_2>0$, independent of
$n$, such that
$$
\Pbb\left(
\|\widehat f_n^{(k_0)}-f^0\|_n^2
\leq
C_f\frac{\log n}{n}
\right)
\geq
1-c_1n^{-c_2}
$$
for all sufficiently large $n$. We will show that, on the event
$\{\widehat k_n=k_0\}$, this signal-estimation bound implies
$$
\max_{1\leq i\leq k_0-1}
|\widehat\tau_i^n-\tau_i^0|
\leq
C_\tau
\left(\frac{\log n}{n}\right)^{1/2}
$$
for a constant $C_\tau<\infty$ independent of $n$.

The argument proceeds in two steps. First, the cubic one-segment
approximation bound in Lemma~3 yields the preliminary localization rate
$$
\max_{1\leq i\leq k_0-1}
|\widehat\tau_i^n-\tau_i^0|
=
O_{\Pbb}\left\{
\left(\frac{\log n}{n}\right)^{1/3}
\right\}.
$$
This establishes consistency and ensures that each estimated
changepoint lies in a fixed neighborhood of its corresponding true
changepoint. Second, the quadratic two-segment approximation bound in
Lemma~3 uses the information on both sides of the true changepoint to
sharpen the rate to
$$
\max_{1\leq i\leq k_0-1}
|\widehat\tau_i^n-\tau_i^0|
=
O_{\Pbb}\left\{
\left(\frac{\log n}{n}\right)^{1/2}
\right\}.
$$
Figure~\ref{fig:convergence_changepoints} illustrates these two stages.

\begin{figure}
    \centering
    \includegraphics[width=1\linewidth]{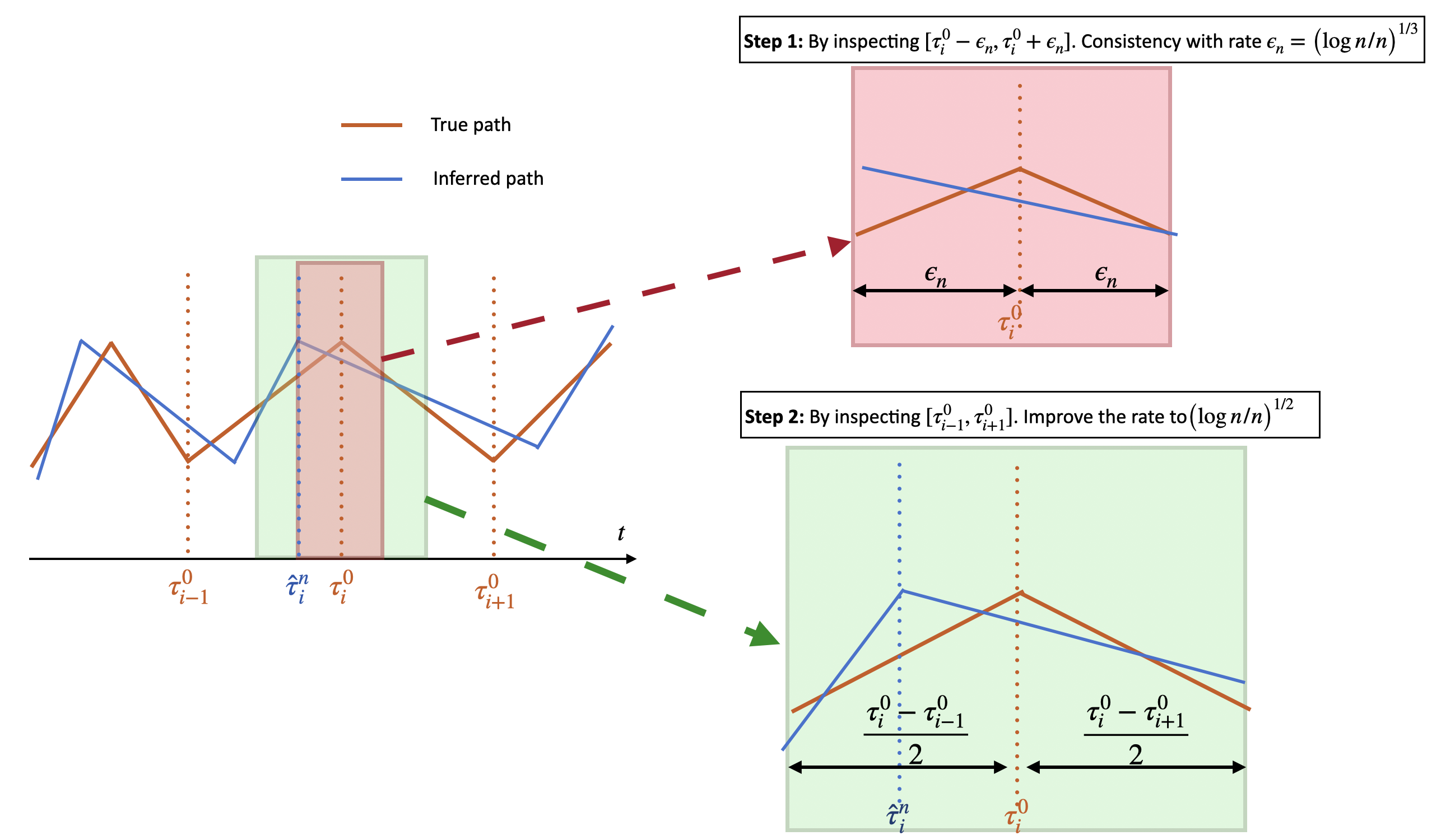}
    \caption{Illustration of the two-stage changepoint-localization argument.
The cubic one-segment approximation bound first establishes consistency
at rate $\{\log n/n\}^{1/3}$, after which the quadratic two-segment
bound sharpens the rate to $\{\log n/n\}^{1/2}$.}
    \label{fig:convergence_changepoints}
\end{figure}

\paragraph{Step 1. Consistency of the estimated changepoints.}

By Lemma~2,
$$
\|\widehat f_n^{(k_0)}-f^0\|_n^2
=
O_{\Pbb}\left(\frac{\log n}{n}\right).
$$
We combine this rate with the cubic one-segment approximation bound in
Lemma~3. We first show that every true changepoint has an estimated
changepoint within distance
$
O_{\Pbb}\left\{
\left(\frac{\log n}{n}\right)^{1/3}
\right\},
$
and then use the equality of the numbers of true and estimated
changepoints to obtain a one-to-one matching. This proves
$$
\widehat\tau_i^n\longrightarrow\tau_i^0,
\qquad
i=1,\dots,k_0-1,
$$
in probability. The sharper localization rate is established in
Step~2.
\paragraph{Step 1.1. Setup.} The main goal of Step 1 is to show
\begin{equation}\label{eq:rate-tau-1-3}
    |\widehat{\tau}_i^{n} - \tau_i^0| \lesssim \left(\dfrac{\log n}{n}\right)^{1/3} \quad \forall i\in [k_0-1],
\end{equation}
as $n\to \infty$, where the multiplicative constant in the inequality above does not depend on $n$.

We proceed with a proof by contradiction. Assume that 
this limit does not hold, then there exists an index $i\in[k_0-1]$ and an increasing subsequence $(m_n)_{n=1}^{\infty} \subset \mathbb{N}$ such that $\widehat{\tau}^{m_n}_j \not \in \left[\tau_i^0 - \epsilon_{m_n}, \tau_i^0 + \epsilon_{m_n} \right]$ for all $j=[k_0-1]$, with $\epsilon_{m_n} \gg (\log m_n/ m_n)^{1/3}$, i.e., $\lim_{n\to \infty} \dfrac{\epsilon_{m_n}}{(\log(m_n) / m_n)^{1/3}} $ $\to \infty$ as $n\to \infty$. Without loss of generality, we can further assume that $\tau_{i-1}^{0} < \tau_i^0 - \epsilon_{m_n} < \tau_i^0 + \epsilon_{m_n} < \tau_{i+1}^{0}$. Then,
$$\sum_{j = \lfloor m_n(\tau_i^0 - \epsilon_{m_n})\rfloor}^{\lfloor{m_n(\tau_i^0 + \epsilon_{m_n})}\rfloor} \norm{\widehat{f}_{m_n}^{(k_0)}(j/{m_n}) - f^0(j/{m_n})}^2 \leq m_n \norm{\widehat{f}_{m_n}^{(k_0)} - f^0}_{m_n}^2 \leq C\log(m_n).$$
We are inspecting the LHS to show the contradiction.
By the assumption, in the interval $\left[\lfloor m_n(\tau_i^0 - \epsilon_{m_n})\rfloor, \lfloor m_n(\tau_i^0 + \epsilon_{m_n})\rfloor \right]$, the function $\widehat{f}_{m_n}^{(k_0)}$ is a linear function, meanwhile $f^0$ is a piecewise linear function with 2 segments having slope $V_{i-1}^{0}$ and $V_{i}^{0}$, respectively. When $n$ is large enough, the Riemann-sum bound gives 
\begin{equation}
\begin{aligned}
\Bigg|
&\frac{1}{m_n}
\sum_{j=\lfloor m_n(\tau_i^0-\epsilon_{m_n})\rfloor}
^{\lfloor m_n(\tau_i^0+\epsilon_{m_n})\rfloor}
\left\|
\widehat f_{m_n}^{(k_0)}(j/m_n)-f^0(j/m_n)
\right\|_2^2-
\int_{\tau_i^0-\epsilon_{m_n}}^{\tau_i^0+\epsilon_{m_n}}
\left\|
\widehat f_{m_n}^{(k_0)}(t)-f^0(t)
\right\|_2^2\,dt
\Bigg|
\leq
\frac{C_R}{m_n}.
\label{eq:loss_function}
\end{aligned}
\end{equation}
Hence, we have
\begin{equation}\label{eq:rate-tau-integral}
    \int_{\tau_i^0 - \epsilon_{m_n}}^{\tau_i^0+\epsilon_{m_n}} \norm{\widehat{f}^{(k_0)}_{m_n}(t) - f^0(t)}^2 dt \leq C \dfrac{\log(m_n)}{m_n}.
\end{equation}

\paragraph{Step 1.2. Applying the best affine approximation bound.}

Our aim is to show that \eqref{eq:rate-tau-integral} cannot hold when
$$
\epsilon_{m_n}
\gg
\left\{
\frac{\log(m_n)}{m_n}
\right\}^{1/3}.
$$
By the assumption in Step~1.1,
$\widehat f_{m_n}^{(k_0)}$ has no changepoint in
$$
[\tau_i^0-\epsilon_{m_n},\tau_i^0+\epsilon_{m_n}],
$$
and is therefore affine on this interval.

The cubic one-segment approximation result in Lemma~3 gives
\begin{align}
\inf_{g\in\mathcal L_d}
\int_{\tau_i^0-\epsilon_{m_n}}^{\tau_i^0+\epsilon_{m_n}}
\|g(t)-f^0(t)\|_2^2\,dt
&=
\frac{\epsilon_{m_n}^3}{24}
\|V_{i+1}^0-V_i^0\|_2^2.
\label{eq:exact-best-linear-loss}
\end{align}
Here, $\mathcal L_d$ denotes the class of
$\Rbb^d$-valued affine functions on
$[\tau_i^0-\epsilon_{m_n},\tau_i^0+\epsilon_{m_n}]$.

Since $\widehat f_{m_n}^{(k_0)}$ belongs to this affine class on the
interval under consideration,
\begin{align}
\int_{\tau_i^0-\epsilon_{m_n}}^{\tau_i^0+\epsilon_{m_n}}
\left\|
\widehat f_{m_n}^{(k_0)}(t)-f^0(t)
\right\|_2^2\,dt\geq
\frac{\epsilon_{m_n}^3}{24}
\|V_{i+1}^0-V_i^0\|_2^2\geq
\frac{\underline C_2^2}{24}
\epsilon_{m_n}^3,
\label{eq:cubic-local-lower-bound}
\end{align}

where the second inequality follows from the minimum-jump condition.

\paragraph{Step 1.3. Deriving the contradiction.}

Combining the lower bound \eqref{eq:cubic-local-lower-bound} in Step 1.2 with \eqref{eq:rate-tau-integral} gives
$$
C\frac{\log(m_n)}{m_n}
\ge
\frac{\underline C_2^2}{24}\epsilon_{m_n}^3.
$$
Equivalently,
$$
\epsilon_{m_n}
\le
\left(\frac{24C}{\underline C_2^2}\right)^{1/3}
\left\{\frac{\log(m_n)}{m_n}\right\}^{1/3},
$$
which contradicts the assumption that
$\epsilon_{m_n}\gg\{\log(m_n)/m_n\}^{1/3}$.

Therefore,
$$
\max_{1\le i\le k_0-1}
|\widehat\tau_i^n-\tau_i^0|
=
O_{\Pbb}\!\left\{
\left(\frac{\log n}{n}\right)^{1/3}
\right\}
$$
on the event $\{\widehat k_n=k_0\}$.

\paragraph{Step 2. Improving the rate to
$\{\log n/n\}^{1/2}$.}

Step~1 established the preliminary localization rate
$$
\max_{1\leq i\leq k_0-1}
|\widehat\tau_i^n-\tau_i^0|
=
O_{\Pbb}\left\{
\left(\frac{\log n}{n}\right)^{1/3}
\right\}.
$$
We now use this consistency result together with the quadratic
two-segment approximation bound in Lemma~3 to obtain the sharper rate
$\{\log n/n\}^{1/2}$.

\paragraph{Step 2.1. Setup and an upper bound for the local loss.}

Fix $i\in\{1,\dots,k_0-1\}$ and choose fixed points $t_1,t_2$ such that
$$
\frac{\tau_{i-1}^0+\tau_i^0}{2}
<
t_1
<
\tau_i^0
<
t_2
<
\frac{\tau_i^0+\tau_{i+1}^0}{2}.
$$
By Step~1,
$
\widehat\tau_i^n
\longrightarrow
\tau_i^0
$
in probability. Moreover, with probability tending to one,
$\widehat\tau_i^n$ is the unique estimated changepoint in $[t_1,t_2]$.
On this event, both $f^0$ and
$\widehat f_n^{(k_0)}$ have exactly one changepoint in $[t_1,t_2]$,
located at $\tau_i^0$ and $\widehat\tau_i^n$, respectively.

By Lemma~2,
$
\|\widehat f_n^{(k_0)}-f^0\|_n^2
=
O_{\Pbb}\left(\dfrac{\log n}{n}\right).
$
Define
$
q_n(t)
:=
\|\widehat f_n^{(k_0)}(t)-f^0(t)\|_2^2.
$
Compactness of the initial-position and velocity parameter spaces
implies that the functions $q_n$ are uniformly Lipschitz. Hence, for
some constant $C_R<\infty$,
\begin{equation}
\label{eq:riemann-step2}
\left|
\frac1n
\sum_{j=\lceil nt_1\rceil}^{\lfloor nt_2\rfloor}
q_n(j/n)
-
\int_{t_1}^{t_2}q_n(t)\,dt
\right|
\leq
\frac{C_R}{n}.
\end{equation}
Therefore,
\begin{align}
\int_{t_1}^{t_2}
\|\widehat f_n^{(k_0)}(t)-f^0(t)\|_2^2\,dt
&\leq
\|\widehat f_n^{(k_0)}-f^0\|_n^2
+
\frac{C_R}{n}=
O_{\Pbb}\left(\frac{\log n}{n}\right).
\label{eq:step2-local-upper}
\end{align}

For $\tau\in(t_1,t_2)$, let
$
\mathcal G_2(\tau;t_1,t_2)
$
denote the class of continuous $\Rbb^d$-valued piecewise-linear
functions on $[t_1,t_2]$ having one changepoint at $\tau$.

Since
$
\widehat f_n^{(k_0)}|_{[t_1,t_2]}
\in
\mathcal G_2(\widehat\tau_i^n;t_1,t_2),
$
we have
\begin{align}
\inf_{g\in\mathcal G_2(\widehat\tau_i^n;t_1,t_2)}
\int_{t_1}^{t_2}
\|g(t)-f^0(t)\|_2^2\,dt
&\leq
\int_{t_1}^{t_2}
\|\widehat f_n^{(k_0)}(t)-f^0(t)\|_2^2\,dt
=
O_{\Pbb}\left(\frac{\log n}{n}\right).
\label{eq:step2-best-loss-upper}
\end{align}

\paragraph{Step 2.2. Applying the quadratic two-segment bound.}

By Step~1,
$
|\widehat\tau_i^n-\tau_i^0|\to0
$
in probability. Therefore, with probability tending to one,
$\widehat\tau_i^n$ lies in the neighborhood of $\tau_i^0$ on which the
quadratic two-segment approximation bound in Lemma~3 applies.

Consequently, there exist constants $c_i>0$ and $\delta_i>0$ such that,
whenever
$
|\widehat\tau_i^n-\tau_i^0|\leq\delta_i,
$
\begin{align}
\inf_{g\in\mathcal G_2(\widehat\tau_i^n;t_1,t_2)}
\int_{t_1}^{t_2}
\|g(t)-f^0(t)\|_2^2\,dt
&\geq
c_i
|\widehat\tau_i^n-\tau_i^0|^2
\|V_{i+1}^0-V_i^0\|_2^2
\nonumber\\
&\geq
c_i\underline C_2^2
|\widehat\tau_i^n-\tau_i^0|^2,
\label{eq:step2-quadratic-lower}
\end{align}
where the second inequality follows from the minimum-jump condition.

\paragraph{Step 2.3. Deriving the improved localization rate.}

Combining
\eqref{eq:step2-best-loss-upper} and
\eqref{eq:step2-quadratic-lower} gives
$$
c_i\underline C_2^2
|\widehat\tau_i^n-\tau_i^0|^2
=
O_{\Pbb}\left(\frac{\log n}{n}\right).
$$
Hence,
$$
|\widehat\tau_i^n-\tau_i^0|
=
O_{\Pbb}\left\{
\left(\frac{\log n}{n}\right)^{1/2}
\right\}.
$$

Since $k_0$ is fixed and there are only finitely many true
changepoints, the bound holds uniformly over
$i=1,\dots,k_0-1$. Therefore,
\begin{equation}
\label{eq:final-tau-rate}
\max_{1\leq i\leq k_0-1}
|\widehat\tau_i^n-\tau_i^0|
=
O_{\Pbb}\left\{
\left(\frac{\log n}{n}\right)^{1/2}
\right\}.
\end{equation}
\end{proof}

\bibliographystyle{plainnat}
\bibliography{bib}

\end{document}